\newtheorem{Theorem}{Theorem}[section]
\newtheorem{Lemma}[Theorem]{Lemma}
\newtheorem{proposition}[Theorem]{Proposition}
\theoremstyle{definition}
\newtheorem{assump}{Assumption}
\newenvironment{myassump}[2][]
{\begin{assump}[#1]}
	{\end{assump}}
\newcommand{\urltilde}{\kern -.15em\lower .7ex\hbox{~}\kern .04em}
\title{\LARGE \bf
Distributed Zeroth Order Optimization Over Random Networks: A Kiefer-Wolfowitz Stochastic Approximation Approach
}
\author{Anit~Kumar~Sahu, Dusan~Jakovetic, Dragana~Bajovic and Soummya~Kar
\thanks{The work of DJ and DB is supported in part by the EU Horizon 2020 project I-BiDaaS, project number 780787. The work of DJ is also supported in part by the Serbian Ministry of Education, Science, and Technological Development, grant 174030. The work of AKS and SK was supported in part by National Science Foundation under grant CCF-1513936.}
\thanks{D. Bajovic is with the Faculty of Technical Sciences, University of Novi Sad 21000 Novi Sad, Serbia
        {\tt\small dbajovic@uns.ac.rs}}%
\thanks{D. Jakovetic is with the Department of Mathematics and Informatics, Faculty of Sciences, University of Novi Sad 21000 Novi Sad, Serbia
	{\tt\small djakovet@uns.ac.rs}}
\thanks{A. K. Sahu and S. Kar are with the Department of Electrical and Computer Engineering, Carnegie Mellon University, Pittsburgh, PA 15213
        {\tt\small \{anits,soummyak\}@andrew.cmu.edu}}%
}
\begin{document}

\maketitle
\thispagestyle{empty}
\pagestyle{empty}

%
%
\begin{abstract}
We study a standard distributed 
optimization framework where $N$ 
networked nodes collaboratively 
minimize the sum of their local 
convex costs. The main body 
of existing work considers the 
described problem when the underling 
network is either static or deterministically 
varying, and 
the distributed optimization algorithm 
is of first or second order, i.e., 
it involves the local costs' gradients 
and possibly the local Hessians. 
 In this paper, we consider 
  the currently understudied but highly relevant 
  scenarios when: 1) only noisy function values' 
  estimates are available (no gradients nor Hessians 
  can be evaluated); and 2) 
 the underlying network 
 is randomly varying 
 (according to an independent, 
 identically distributed process).   
  For the described random networks-zeroth order optimization setting, 
  we develop a distributed stochastic 
  approximation method of the Kiefer-Wolfowitz 
  type. Furthermore, 
  under standard smoothness and strong convexity assumptions 
  on the local costs, 
  we establish the $O(1/k^{1/2})$ 
  mean square convergence rate for the method -- 
  the rate that matches 
  that of the method's centralized counterpart under equivalent 
  conditions.
\end{abstract}


\section{Introduction}
\label{section-intro}
\noindent We consider a commonly studied distributed optimization setting where $N$ nodes are interconnected in a generic, connected network, and they collectively minimize the sum of their local convex costs with respect to a global (vector-valued) variable of common interest. There has been a significant and increasing interest in the described distributed optimization problems, 
e.g.,~\cite{DistributedMirrorDescent,RabbatDistributedStronglyCVX,Kozat,NedicStochasticPush}, which include algorithms which have access to a stochastic first order or a second order oracle. In that, every query made to the oracle gives an unbiased estimate of the gradient or the Hessian based on whether its a first order or second order oracle. Moreover, in the context of distributed optimization, most existing work in the literature consider static or deterministically varying networks. \\
\noindent In this paper, we consider the  currently largely understudied, but highly relevant case of 1) zeroth order optimization (only noisy function values, and no gradients nor Hessians are available); and 2) randomly varying networks, more precisely, the networks modeled through a sequence of independent identically distributed (i.i.d.) symmetric Laplacian matrices, 
such that the network is connected on average. Regarding the former,  zeroth order methods become highly desirable when the functions of interest are not given in analytical forms or evaluating the gradient or the Hessian is expensive. For the latter, random network models are more adequate than deterministically varying or static models when the networked nodes communicate through unreliable wireless links, like, e.g., with many emerging internet of things applications, including, for example, maintenance and monitoring of industrial manufacturing systems or large scale industrial plants where communication environments may be harsh.\\
\noindent Our main contributions are as follows. We propose a distributed stochastic approximation method of the Kiefer-Wolfowitz type~(see, for example \cite{kiefer1952stochastic}). The method utilizes standard strategy in distributed (sub)gradient-like methods, where the iterations consist of 1)local estimates' weight averaging across nodes' neighborhoods (consensus); and 2) 
a negative step with respect to the Kiefer-Wolfowitz-type estimates of local functions gradients (innovations). We show that, by a careful design of the consensus and innovations time-varying 
weights, the distributed Kiefer-Wolfowitz method achieves the $O(1/k^{1/2})$ mean square convergence rate. This rate is achieved for twice continuously differentiable, convex local costs with bounded Hessians, assuming only the availability of noisy function values' estimates, with zero-mean and finite-second moment noises. The achieved $O(1/k^{1/2})$ rate of the distributed method 
is highest possible and matches that of the counterpart centralized Kiefer-Wolfowitz stochastic approximation method and the minimax rate for the aforementioned class of cost functions~(see, \cite{duchi2015optimal}).\\
We now briefly review the literature to help us contrast this paper from prior work. Zeroth order optimization, where the stochastic oracle can be queried for only noisy function values has been applied to scenarios involving black-box based optimization and high-dimensional optimization~(see, for example \cite{chen2017zoo,wang2017stochastic}). Various approaches to zeroth order optimization have been adopted such as the classical Kiefer Wolfowitz stochastic approximation~\cite{kiefer1952stochastic,kushner2003stochastic}, simultaneous perturbation stochastic approximation~\cite{spall1992multivariate} and other random direction random smoothing 
based variants such as \cite{nesterov2011random,duchi2015optimal,wang2017stochastic}. However, all the aforementioned references solve the minimization problem in a centralized framework or the setting where a single agent has all the data available to it. In the context of random networks, in~\cite{CDCSGD2018}, 
we consider a distributed stochastic gradient method and establish the method's $O(1/k^{2})$ convergence rate. Reference~\cite{CDCSGD2018} complements the current paper by assuming 
that nodes have access to gradient estimates. Recently, in \cite{hajinezhad2017zeroth}, a distributed zeroth optimization algorithm was proposed for non-convex minimization with a static graph where a random directions random smoothing approach was employed. In contrast, in this paper we solve a distributed zeroth optimization algorithm for a strongly convex minimization employing a Kiefer-Wolfowitz type stochastic approximation with a random sequence of graphs.\\
\noindent\textbf{Paper organization}.\\The next
paragraph introduces notation.
Section \ref{sec:model} describes the model and the stochastic gradient method we consider.
Section~\ref{sec:perf_analysis} states and proves the main result on the algorithm's MSE convergence rate.
Section~\ref{sec:sim} provides a simulation example. Finally, we conclude in Section~\ref{sec:conc}.

\noindent\textbf{Notation}.\\
We denote by $\mathbb R$ the set of real numbers and by ${\mathbb R}^m$ the $m$-dimensional
Euclidean real coordinate space. We use normal lower-case letters for scalars,
lower case boldface letters for vectors, and upper case boldface letters for
matrices. Further, we denote by: $\mathbf{A}_{ij}$ the entry in the $i$-th row and $j$-th column of
a matrix $\mathbf{A}$;
$\mathbf{A}^\top$ the transpose of a matrix $A$; $\otimes$ the Kronecker product of matrices;
$I$, $0$, and $\mathbf{1}$, respectively, the identity matrix, the zero matrix, and the column vector with unit entries; $\mathbf{J}$ the $N \times N$ matrix $J:=(1/N)\mathbf{1}\mathbf{1}^\top$.
When necessary, we indicate the matrix or vector dimension as a subscript.
Next, $A \succ  0 \,(A \succeq  0 )$ means that
the symmetric matrix $A$ is positive definite (respectively, positive semi-definite).
We denote by:
$\|\cdot\|=\|\cdot\|_2$ the Euclidean (respectively, spectral) norm of its vector (respectively, matrix) argument; $\lambda_i(\cdot)$ the $i$-th smallest eigenvalue; $\nabla h(w)$ and $\nabla^2 h(w)$ the gradient and Hessian, respectively, evaluated at $w$ of a function $h: {\mathbb R}^m \rightarrow {\mathbb R}$, $m \geq 1$; $\mathbb P(\mathcal A)$ and $\mathbb E[u]$ the probability of
an event $\mathcal A$ and expectation of a random variable $u$, respectively. $\mathbf{e}_{j}$ denotes the $j$-th column on the identity matrix $\mathbf{I}$ where the dimension is made clear from the context.
Finally, for two positive sequences $\eta_n$ and $\chi_n$, we have: $\eta_n = O(\chi_n)$ if
$\limsup_{n \rightarrow \infty}\frac{\eta_n}{\chi_n}<\infty$.      

\section{Model, Algorithm, and Preliminaries}
\label{sec:model}
The network of $N$ agents in our setup collaboratively aim to solve the following unconstrained problem:
{\small\begin{align}
\label{eq:opt_problem}
\min_{\mathbf{x}\in\mathbb{R}^{d}}\sum_{i=1}^{N}f_{i}(\mathbf{x}),
\end{align}}
where $f_{i}: \mathbb{R}^{d}\mapsto\mathbb{R}$ is a convex
function available to node $i$, $i=1,...,N$. We make the following assumption on the functions $f_{i}(\cdot)$:

\begin{myassump}{A1}
	\label{as:1}
	For all $i=1,...,N$, function
	$f_{i}: \mathbb{R}^{d}\mapsto\mathbb{R}$ is twice continuously differentiable with Lipschitz continuous gradients. In particular, there exist constants $L, \mu > 0$ such that for all $\mathbf{x} \in {\mathbb R}^d$, 
	{\small\begin{align*}
	\mu\ \mathbf{I} \preceq \nabla^2 f_i(\mathbf{x}) \preceq L\mathbf{I}.
	\end{align*}}
\end{myassump}

From Assumption~\ref{as:1} we have that each $f_i$, $i=1,\cdots,N$, is strongly convex with modulus $\mu$. Using standard properties of convex functions, we have for any $\mathbf{x,y} \in {\mathbb R}^d$:
{\small\begin{align*}
&f_i(\mathbf{y}) \geq f_i(\mathbf{x})+ \nabla f_i(\mathbf{x})^\top \,(\mathbf{y}-\mathbf{x})+ \frac{\mu}{2}\|\mathbf{x}-\mathbf{y}\|^2,\\
&\|\nabla f_i(\mathbf{x}) -\nabla f_i(\mathbf{y})\| \leq L\,\|\mathbf{x}-\mathbf{y}\|.
\end{align*}}

\noindent We also have that from assumption \ref{as:1}, the optimization problem in \eqref{eq:opt_problem} has a unique solution, which we denote by $\mathbf{x}^{\ast}\in\mathbb{R}^{d}$. Throughout the paper, we use the sum function which is defined as $f:\,{\mathbb R}^d \rightarrow \mathbb R$,~$f(\mathbf{x})=\sum_{i=1}^N f_i(\mathbf{x})$.

\noindent We consider distributed stochastic zeroth order optimization to solve~\eqref{eq:opt_problem} over random networks. Inter-agent communication is modeled by a sequence of independent and identically distributed~(i.i.d.)~undirected random networks: at each time instant $k=0,1,...$,
the underlying inter-agent communication network is denoted by $\mathcal{G}(k) = (V,\mathbf{E}(k))$, with $V=\{1,...,N\}$ being the set of nodes and $\mathbf{E}(k)$ being the random set of undirected edges. The edge connecting node $i$ and $j$ is denoted as $\{i,j\}$. The time-varying random neighborhood of node $i$ at time $k$ (excluding node~$i$) is represented as $\Omega_i(k)=\{j \in V:\,\,\{i,j\} \in \mathbf{E}(k)\}$. The graph Laplacian of the random graph $\mathcal{G}(k)$ at time $k$ is given by $\mathbf{L}(k)\in\mathbb{R}^{N\times N}$, where $\mathbf{L}(k)$ is given by $\mathbf{L}_{ij}(k) =-1,$ if $\{i,j\} \in \mathbf{E}(k)$, $i \neq j$; $\mathbf{L}_{ij}(k) =0,$ if $\{i,j\} \notin \mathbf{E}(k)$, $i \neq j$; and~$\mathbf{L}_{ii}(k) =-\sum_{j \neq i}\mathbf{L}_{ij}(k).$ It is to be noted that the Laplacian at each time instant is symmetric and a positive semidefinite matrix. As the considered graph sequence is i.i.d., we have that $\mathbb{E}[\, \mathbf{L}(k)\,]=\overline{\mathbf{L}}$. Let the graph corresponding to $\overline{\mathbf{L}}$ be given by $\overline{\mathcal{G}} = (V,\overline{\mathbf{E}})$.

\noindent We make the following assumption on $\overline{\mathcal{G}}$.
\begin{myassump}{A2}
	\label{as:2}
	\label{assumption-network}
	The inter-agent communication graph is connected on average, i.e., $\overline{\mathcal{G}}$ is connected. In other words, $\lambda_{2}(\mathbf{\overline{\mathcal{L}}}) > 0$.
\end{myassump}
\noindent We denote by $|\mathcal{L}|$ the cardinality of a set of Laplacians chosen from the total number of possible Laplacians~(necessarily finite) so as to ensure $\underline{p} = \inf_{\mathbf{L}\in\mathcal{L}} \mathbb{P}\left(\mathbf{L}(t)=\mathbf{L}\right) > 0$.

\subsection{Distributed Kiefer Wolfowitz type Optimization}
\label{sec:kwsa}
We employ a distributed Kiefer Wolfowitz stochastic approximation~(KWSA) type method to solve~\eqref{eq:opt_problem}. Each node~$i$, $i=1,...,N$, in our setup maintains a local copy of its local estimate of the optimizer $\mathbf{x}_i(k) \in {\mathbb R}^d$ at all times. In order to carry out the optimization, each agent $i$ makes queries to a stochastic zeroth order oracle at time $k$, from which the agent obtains noisy function values of $f_i(\mathbf{x}_i(k))$. Denote the noisy value of $f_{i}(\cdot)$ as $\widehat{f}_{i}(\cdot)$ where,
\begin{align}
\label{eq:func_dist}
\widehat{f}_{i}(\mathbf{x}_{i}(k))=f_{i}(\mathbf{x}_{i}(k))+\widehat{v}_{i}(k).
\end{align}
Due to the unavailability of the analytic form of the functionals, the gradient can not be evaluated and hence, we resort to a gradient approximation. In order to approximate the gradient, each agent makes two calls to the stochastic zeroth order oracle corresponding to each dimension. For instance, for dimension $j\in\{1,\cdots,d\}$ agent $i$ queries for $f_i(\mathbf{x}_i(k)+c_{k}\mathbf{e}_{j})$ and $f_i(\mathbf{x}_i(k)-c_{k}\mathbf{e}_{j})$ at time $k$ and obtains $\widehat{f}_i(\mathbf{x}_i(k)+c_{k}\mathbf{e}_{j})$ and $\widehat{f}_i(\mathbf{x}_i(k)-c_{k}\mathbf{e}_{i})$ respectively, where $c_{k}$ is a carefully chosen time-decaying potential~(to be specified soon). Denote by $\mathbf{g}_{i}(\mathbf{x}_{i}(k))$ the approximated gradient, obtained as for each $j\in\{1,\cdots,d\}$ :
\begin{align}
\label{eq:KW_grad}
&\mathbf{e}_{j}^{\top}\mathbf{g}_{i}(\mathbf{x}_{i}(k)) = \frac{\widehat{f}_{i}\left(\mathbf{x}_{i}(k)+c_{k}\mathbf{e}_{j}\right)-\widehat{f}_{i}\left(\mathbf{x}_{i}(k)-c_{k}\mathbf{e}_{j}\right)}{2c_{k}}\nonumber\\
&\Rightarrow\mathbf{e}_{j}^{\top}\mathbf{g}_{i}(\mathbf{x}_{i}(k)) = \frac{f_{i}\left(\mathbf{x}_{i}(k)+c_{k}\mathbf{e}_{j}\right)}{2c_{k}}\nonumber\\&-\frac{f_{i}\left(\mathbf{x}_{i}(k)-c_{k}\mathbf{e}_{j}\right)}{2c_{k}}+\frac{\hat{v}^{+}_{i,j}(k)-\hat{v}^{-}_{i,j}(k)}{2c_{k}},
\end{align}
where $\hat{v}^{+}_{i,j}(k)$ and $\hat{v}^{-}_{i,j}(k)$ denote the measurement noise corresponding to the measurements $\widehat{f}_{i}\left(\mathbf{x}_{i}(k)+c_{k}\mathbf{e}_{j}\right)$ and $\widehat{f}_{i}\left(\mathbf{x}_{i}(k)-c_{k}\mathbf{e}_{j}\right)$ respectively. The vectors $\widehat{\mathbf{v}}^{+}_{i}(k)\in\mathbb{R}^{d}$ and $\widehat{\mathbf{v}}^{-}_{i}(k)\in\mathbb{R}^{d}$ stack all the component wise measurement noise at a node $i$ and are given by $\widehat{\mathbf{v}}^{+}_{i}(k)=\left[\hat{v}^{+}_{i,1}(k),\cdots,\hat{v}^{+}_{i,N}(k)\right]$ and $\widehat{\mathbf{v}}^{-}_{i}(k)=\left[\hat{v}^{-}_{i,1}(k),\cdots,\hat{v}^{-}_{i,N}(k)\right]$ respectively. For the rest of the paper, we define $\mathbf{v}_{i}(k) \doteq \left(\widehat{\mathbf{v}}^{+}_{i}(k)-\widehat{\mathbf{v}}^{-}_{i}(k)\right)/2$.
\noindent Using the mean value theorem, we have,
\begin{align}
\label{eq:KW_grad_1}
\mathbf{g}_{i}(\mathbf{x}_{i}(k)) = \nabla f(\mathbf{x}_{i}(k))+c_{k} \mathbf{P}_{i}(\mathbf{x}_{i}(k))+\frac{\mathbf{v}_{i}(k)}{c_{k}},
\end{align}
where
{\small\begin{align*}
&\mathbf{e}_{j}^{\top}\mathbf{P}_{i}(\mathbf{x}_{i}(k)) = \frac{\mathbf{e}_{j}^{\top}\nabla^{2}f(\mathbf{x}_{i}(k)+c_{k}\alpha_{i,j}^{+}\mathbf{e}_{j})\mathbf{e}_{j}}{2}\nonumber\\&-\frac{\mathbf{e}_{j}^{\top}\nabla^{2}f(\mathbf{x}_{i}(k)-c_{k}\alpha_{i,j}^{-}\mathbf{e}_{j})\mathbf{e}_{j}}{2},
\end{align*}}
\noindent where $0\le \alpha_{i,j}^{+}, \alpha_{i,j}^{-}\le 1$. Finally, for arbitrary deterministic initializations $\mathbf{x}_i(0) \in {\mathbb R}^d$, $i=1,...,N$, the optimizer update rule at node $i$ and $k=0,1,...,$ is given as follows:
\begin{align}
\label{eq:update_rule_node}
&\mathbf{x}_i(k+1)=\mathbf{x}_i(k) - \beta_k\sum_{j \in \Omega_i(k)} \left( \mathbf{x}_i(k) - \mathbf{x}_j(k) \right)\nonumber\\
&-\alpha_k\mathbf{g}_{i}(\mathbf{x}_{i}(k)).
\end{align}
It is to be noted that unlike first order stochastic gradient methods, where the algorithm has access to unbiased estimates of the gradient. The local gradient estimates $\mathbf{g}_{i}(\cdot)$ used in \eqref{eq:update_rule_node} are biased (see \eqref{eq:KW_grad_1}) due to the unavailability of the exact gradient functions and their approximations using the zeroth order scheme in \eqref{eq:KW_grad}. The update is carried on in all agents parallely in a synchronous fashion. The weight sequences $\{\alpha_{k}\}$, $\{c_{k}\}$ and $\{\beta_{k}\}$ are given by $\alpha_{k}=\alpha_0/(k+1)$, $c_{k}=c_0/(k+1)^{\delta}$ and $\beta_{k}=\beta_0/(k+1)^{\tau}$ respectively, where $\alpha_0, c_0, \beta_0 > 0$. We state an assumption on the weight sequences before proceeding further.
\begin{myassump}{A3}
\label{as:3}
The constants $\alpha_0$, $\delta > 0$ and $\tau\in (0,1)$ are chosen such that,
\begin{align}
\label{eq:a3}
\sum_{k=1}^{\infty} \frac{\alpha_{k}^2}{c_k^2} < \infty, \mu\alpha_{0} < 1.
\end{align}	
\end{myassump}
\noindent Denote by $\mathbf{x}(k) = \left[\mathbf{x}_{1}^{\top}(k),\cdots,\mathbf{x}_{N}^{\top}(k)\right]^{\top}\in\mathbb{R}^{Nd}$, $\mathbf{P}(\mathbf{x}(k)) = \left[\mathbf{P}_{1}^{\top}\left(\mathbf{x}_{1}(k)\right),\cdots,\mathbf{P}_{N}^{\top}\left(\mathbf{x}_{N}(k)\right)\right]^{\top}\in\mathbb{R}^{Nd}$ 
 the vectors that stacks the local optimizers and the gradient bias terms~(see  \eqref{eq:KW_grad_1})of all nodes.
 Also, define function $F: \mathbb{R}^{Nd}\mapsto\mathbb{R}$, by
 $F(\mathbf{x})=\sum_{i=1}^{N}f_{i}(\mathbf{x}_{i})$,
 with $\mathbf{x} = \left[\mathbf{x}_{1}^{\top},\cdots,\mathbf{x}_{N}^{\top}\right]^{\top}\in\mathbb{R}^{Nd}.$
 Finally, let $\mathbf{W}_{k}=\left(\mathbf{I}-\mathbf{L}_{k}\right)\otimes \mathbf{I}_{d}$, where $\mathbf{L}_{k}=\beta_k\,\mathbf{L}(k)$.
Then the update in \eqref{eq:update_rule_node} can be written as:
\begin{align}
\label{eq:update_rule}
&\mathbf{x}(k+1) = \mathbf{W}_{k}\mathbf{x}(k)\nonumber\\&-\alpha_{k}\left(\nabla F(\mathbf{x}(k))+c_{k}\mathbf{P}(\mathbf{x}(k))+\frac{\mathbf{v}(k)}{c_{k}}\right).
\end{align}
\noindent Let $\mathcal{F}_k$ denote the history of the proposed algorithm up to time $k$. Given that the sources of randomness in our algorithm are the noise sequence $\{\mathbf{v}(k)\}$ and the random network sequence $\{\mathbf{L}_{k}\}$, $\mathcal{F}_k$ is given by the $\sigma$-algebra generated by the collection of random variables $\{\,\mathbf{L}(s),\,\mathbf{v}_i(s)\}$,~$i=1,...,N$,~$s=0,...,k-1$.
\begin{myassump}{A4}
  	\label{as:4}
  	For each $i=1,...,N$,
  	the sequence of measurement noises $\{\widehat{\mathbf{v}}_i(k)\}$
  	satisfies for all $k=0,1,...$:
  	\begin{align}
  	\label{eq:as4}
  	&\mathbb{E}[\,\widehat{\mathbf{v}}_i(k)\,|\,\mathcal{F}_k\,] =0,\,\,\mathrm{almost\,surely\,(a.s.)}\nonumber\\
  	&\mathbb{E}[\,\|\widehat{\mathbf{v}}_i(k)\|^2\,|\,\mathcal{F}_k\,] \leq c_{f}\|\mathbf{x}_i(k)\|^2+\sigma^{2},
  	\,\,\mathrm{a.s.},
  	\end{align}
  	where $c_f$ and $\sigma^{2}$ are nonnegative constants.
  	\end{myassump}
\noindent It is to be noted that assumption~\ref{as:4} is trivially satisfied, when $\{\mathbf{v}_i(k)\}$ is an i.i.d. zero-mean, finite second moment, noise sequence such that $\mathbf{v}_i(k)$ is also independent of the history~$\mathcal{F}_k$. However, the assumption allows the noise to be dependent on the current iterate at all times.
\section{Performance Analysis}
\label{sec:perf_analysis}
\subsection{Main Result and Auxiliary Lemmas}
We state the main result concerning the mean square error at each agent $i$ next.
\begin{Theorem}
	\label{theorem-1}
	1) Consider the optimizer estimate sequence $\{\mathbf{x}(k)\}$ generated by the algorithm \eqref{eq:update_rule_node}. Let assumptions \ref{as:1}-\ref{as:4} hold. Then, for each node~$i$'s optimizer estimate $\mathbf{x}_i(k)$ and the solution $\mathbf{x}^\star$ of problem~\eqref{eq:opt_problem}, $\forall k \geq k_3$ there holds:
	\begin{align}
	\label{eq:th1}
	&\mathbb{E}\left[\left\|\mathbf{x}_{i}(k)-\mathbf{x}^{\ast}\right\|^{2}\right] \le 2M_{k}+\frac{64N\Delta_{1,\infty}\alpha_{0}^{2}}{\mu c_0^2p_{\mathcal{L}}^{2}\beta_0^{2}(k+1)^{2-2\tau-2\delta}}\nonumber\\&\frac{4(L-\mu)^{2}N^{2}dc_{0}^{2}}{\mu(k+1)^{2\delta}}+2Q_{k}+
	\frac{8\Delta_{1,\infty}\alpha_{0}^{2}}{p_{\mathcal{L}}^{2}\beta_0^{2}c_0^2(k+1)^{2-2\tau-2\delta}}\nonumber\\&+
	\frac{4N\alpha_{0}\left(c_{f}q_{\infty}(N,d,\alpha_0,c_0)+N\sigma_1^{2}\right)}{c_0^2\mu(k+1)^{1-2\delta}},
	\end{align}
	where, $p_{\mathcal{L}}=\underline{p}\lambda_{2}\left(\overline{\mathbf{L}}\right)/|\mathcal{L}|$, $k_3 = \inf\left\{k~:~\beta_{k}^{2}<\frac{1}{N^{2}}, p_{\mathcal{L}}^{2}\beta_{k}^{2}<1\right\}$, $\Delta_{1,\infty}=6c_{f}q_{\infty}(N,d,\alpha_0,c_0)+6N\sigma_{1}^{2}$  and $q_{\infty}(N,d,\alpha_0,c_0)=\mathbb{E}\left[\left\|\mathbf{x}(k_0)-\mathbf{x}^{o}\right\|^{2}\right]+\frac{\sqrt{Nd}(L-\mu)\alpha_0c_0}{\delta}+\frac{Nd(L-\mu)^{2}\alpha_0^2c_0^2}{1+2\delta}+\frac{\alpha_0^2\left(2c_{f}N\left\|\mathbf{x}^{o}\right\|^{2}+N\sigma^{2}\right)}{c_0^{2}(1-2\delta)}+4\frac{\|\nabla F(\mathbf{x}^{o})\|^2}{\mu^2}$. In the latter $k_0$ is given by $k_{0}=\inf\left\{k~:~\frac{\mu}{2} > (L-\mu)\sqrt{Nd}c_{k}+\frac{2c_{f}\alpha_{k}}{c_{k}^{2}}\right\}.$~$M_{k}$ and $Q_{k}$ are terms which decay faster than the rest of the terms.\\
	2) In particular, the rate of decay of the RHS of \eqref{eq:th1} is given by $(k+1)^{-\delta_1}$, where $\delta_{1}=\min\left\{1-2\delta,2-2\tau-2\delta,2\delta\right\}$. By, optimizing over $\tau$ and $\delta$, we obtain that for $\tau=1/2$ and $\delta=1/4$ and hence,
	\begin{align*}
	&\mathbb{E}\left[\left\|\mathbf{x}_{i}(k)-\mathbf{x}^{\ast}\right\|^{2}\right]
	\le 2M_{k}+\frac{64N\Delta_{1,\infty}\alpha_{0}^{2}}{\mu c_0^2p_{\mathcal{L}}^{2}\beta_0^{2}(k+1)^{0.5}}\nonumber\\&\frac{4(L-\mu)^{2}N^{2}dc_{0}^{2}}{\mu(k+1)^{0.5}}+2Q_{k}+
	\frac{8\Delta_{1,\infty}\alpha_{0}^{2}}{p_{\mathcal{L}}^{2}\beta_0^{2}c_0^2(k+1)^{0.5}}\nonumber\\&+
	\frac{4N\alpha_{0}\left(c_{f}q_{\infty}(N,d,\alpha_0,c_0)+N\sigma_1^{2}\right)}{c_0^2\mu(k+1)^{0.5}}= O\left(\frac{1}{k^{\frac{1}{2}}}\right),~~\forall i.
	\end{align*}
\end{Theorem}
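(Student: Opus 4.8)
The plan is to control each node's error by splitting it into a \emph{disagreement} (consensus) component and a \emph{network-average} tracking component. I would introduce the average $\overline{\mathbf{x}}(k)=\frac{1}{N}\sum_{i=1}^{N}\mathbf{x}_i(k)$ and the stacked disagreement $\mathbf{x}_\perp(k)=\mathbf{x}(k)-(\mathbf{J}\otimes\mathbf{I}_d)\mathbf{x}(k)$, so that the triangle inequality gives $\|\mathbf{x}_i(k)-\mathbf{x}^{\ast}\|^2\le 2\|\mathbf{x}_\perp(k)\|^2+2\|\overline{\mathbf{x}}(k)-\mathbf{x}^{\ast}\|^2$ for every $i$, and bound the two pieces by separate stochastic recursions before recombining. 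Because Assumption~\ref{as:4} permits the noise second moment to grow like $\|\mathbf{x}_i(k)\|^2$, the first auxiliary step is to establish a uniform second-moment bound $\sup_{k\ge k_0}\mathbb{E}[\|\mathbf{x}(k)-\mathbf{x}^{o}\|^2]\le q_\infty$; this is where the summability $\sum_k\alpha_k^2/c_k^2<\infty$ and $\mu\alpha_0<1$ from Assumption~\ref{as:3}, together with the threshold $k_0$ (chosen so the strong-convexity drift dominates the finite-difference bias and the noise coupling), are used.

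For the disagreement, I would use that $\mathbf{W}_{k}$ fixes the consensus subspace to write $\mathbf{x}_\perp(k+1)=(\mathbf{W}_{k}-\mathbf{J}\otimes\mathbf{I}_d)\mathbf{x}_\perp(k)-\alpha_k(\mathbf{I}-\mathbf{J}\otimes\mathbf{I}_d)(\nabla F(\mathbf{x}(k))+c_k\mathbf{P}(\mathbf{x}(k))+\mathbf{v}(k)/c_k)$, take conditional expectations exploiting the independence of $\mathbf{L}(k)$ from $\mathcal{F}_k$, and derive a contraction of the form $\mathbb{E}[\|\mathbf{x}_\perp(k+1)\|^2\mid\mathcal{F}_k]\le(1-p_{\mathcal{L}}^2\beta_k^2)\|\mathbf{x}_\perp(k)\|^2+(\text{driving terms})$. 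This is the delicate point: since a single realization $\mathcal{G}(k)$ may be disconnected, no per-instant contraction is available, and the geometric decay emerges only \emph{on average} through Assumption~\ref{as:2}, which is exactly what $p_{\mathcal{L}}=\underline{p}\lambda_2(\overline{\mathbf{L}})/|\mathcal{L}|$ and the second-order factor $\beta_k^2$ quantify; the threshold $k_3$ enforces $p_{\mathcal{L}}^2\beta_k^2<1$ (and $\beta_k^2<1/N^2$) so that the contraction regime is valid. The dominant driving term is the amplified measurement noise, of order $\alpha_k^2/c_k^2$, so solving the recursion yields $\mathbb{E}[\|\mathbf{x}_\perp(k)\|^2]=O(\alpha_k^2/(c_k^2\beta_k^2))=O((k+1)^{-(2-2\tau-2\delta)})$, accounting for the $p_{\mathcal{L}}^2\beta_0^2c_0^2(k+1)^{2-2\tau-2\delta}$ terms in \eqref{eq:th1}.

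For the average, since $\mathbf{1}^\top\mathbf{L}(k)=0$ the consensus step leaves $\overline{\mathbf{x}}(k)$ unchanged, giving $\overline{\mathbf{x}}(k+1)=\overline{\mathbf{x}}(k)-\frac{\alpha_k}{N}\sum_i(\nabla f_i(\mathbf{x}_i(k))+c_k\mathbf{P}_i+\mathbf{v}_i(k)/c_k)$. I would replace $\nabla f_i(\mathbf{x}_i(k))$ by $\nabla f_i(\overline{\mathbf{x}}(k))$ at the cost of a gradient-mismatch term bounded via $L$-Lipschitzness by $O(\|\mathbf{x}_\perp(k)\|)$, then invoke $\mu$-strong convexity of $f$ to obtain $\mathbb{E}[\|\overline{\mathbf{x}}(k+1)-\mathbf{x}^{\ast}\|^2\mid\mathcal{F}_k]\le(1-\mu\alpha_k)\|\overline{\mathbf{x}}(k)-\mathbf{x}^{\ast}\|^2+(\text{three driving terms})$. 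Splitting the cross terms by Young's inequality, the three drivings are the noise $O(\alpha_k^2/c_k^2)$, the finite-difference bias $O(\alpha_k c_k^2)$ (using the bounded-Hessian control of $\mathbf{P}$ furnished by Assumption~\ref{as:1}, i.e.\ the $(L-\mu)$ factors), and the consensus coupling $O(\alpha_k\,\mathbb{E}[\|\mathbf{x}_\perp(k)\|^2])$, where $\mu\alpha_0<1$ keeps the contraction factor positive.

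Finally, I would apply a standard deterministic lemma for recursions $u_{k+1}\le(1-a_1(k+1)^{-1})u_k+a_2(k+1)^{-\zeta}$, with $a_1$ large enough relative to $\zeta$ (guaranteed beyond $k_3$), to convert each driving term into its rate: the noise gives $(k+1)^{-(1-2\delta)}$, the bias gives $(k+1)^{-2\delta}$, and the consensus coupling inherits $(k+1)^{-(2-2\tau-2\delta)}$; the transient/initialization pieces and faster-decaying cross contributions are collected into $M_k$ and $Q_k$. Summing the two components reproduces \eqref{eq:th1}, whose overall exponent is $\delta_1=\min\{1-2\delta,\,2-2\tau-2\delta,\,2\delta\}$. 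Equating $1-2\delta=2\delta$ forces $\delta=1/4$, and then $2-2\tau-2\delta=1/2$ forces $\tau=1/2$, so all three exponents equal $1/2$ and the rate is $O(k^{-1/2})$. The main obstacle I anticipate is the disagreement contraction: extracting a usable decay from ``connectivity on average'' when individual graphs may be disconnected, and doing so jointly with the iterate-dependent noise of Assumption~\ref{as:4}, which couples the moment-boundedness step to the rate estimates.
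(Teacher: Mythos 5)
Your proposal follows essentially the same route as the paper: a uniform second-moment bound on the iterates (Lemma \ref{Lemma-MSS-BDD}, with the same threshold $k_0$ and constant $q_\infty$), a disagreement recursion contracted through the average-connectivity constant $p_{\mathcal{L}}$ yielding $O((k+1)^{-(2-2\tau-2\delta)})$ (Lemma \ref{lemma-disag-bound}), and an average-iterate recursion under strong convexity whose three driving terms produce exactly the exponents $1-2\delta$, $2\delta$, and $2-2\tau-2\delta$ that you balance at $\delta=1/4$, $\tau=1/2$. The one cosmetic difference is that the paper obtains the consensus contraction as $1-p_{\mathcal{L}}\beta_k$ (linear in $\beta_k$, via Lemma 4.4 of Kar et al.) with the driving term amplified by $1/s(k)$, rather than your $1-p_{\mathcal{L}}^2\beta_k^2$ with unamplified driving; both bookkeepings land on the same $\alpha_k^2/(c_k^2 p_{\mathcal{L}}^2\beta_k^2)$ rate, but the linear form is the one that remains valid for every $\tau\in(0,1)$, since $\sum_k\beta_k^2<\infty$ once $\tau>1/2$.
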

\noindent Theorem~\ref{theorem-1} establishes the $O(1/k^{1/2})$ MSE rate of convergence of the algorithm~\eqref{eq:update_rule_node}; due to the assumed $f_i$'s strong convexity, the theorem also implies that $\mathbb{E}\left[ f(\mathbf{x}_i(k)) - f(\mathbf{x}^\star)\right] =O(1/k^{1/2})$. Note that the expectation in Theorem~\ref{theorem-1} is both with respect to randomness in gradient noises and with respect to the randomness in the underlying network. The $O(1/k^{1/2})$ rate is independent of the statistics of the underlying random network, as long as the network is connected on average.\\
\noindent From \eqref{eq:th1}, it might seem that the dependence of the upper bound is linear in terms of $d$. However, on tuning the constants $\alpha_0\asymp d^{-1/5}$, $\beta_0\asymp d^{-1/10}$ and $c_0\asymp d^{-3/10}$, the dependence of $\mathbb{E}\left[\left\|\mathbf{x}_{i}(k)-\mathbf{x}^{\ast}\right\|^{2}\right]$ can be reduced to $d^{2/5}$. It is to be noted that the upper bound derived in \eqref{eq:th1} matches with that of the minimax bound for (centralized)~zeroth order optimization with twice continuously differentiable cost functions as derived in \cite{duchi2015optimal}. The sublinear rate of convergence of zeroth order optimization algorithms in the context of KWSA can be attributed to the biased gradients. For better finite time convergence rates, bias-reduction techniques such as the ``twicing trick'' and finite difference interpolation techniques can be used.\\
\noindent\textbf{Proof strategy and auxiliary lemmas}.
Establishing the main result in Theorem~\ref{theorem-1} involves three crucial steps which are outlined in the subsections \ref{sec:bound}, \ref{sec:dis_bound} and \ref{sec:opt_gap}.
Subsection \ref{sec:bound} concerns with the mean square boundedness of the iterates $\mathbf{x}_i(k)$, which also implies the mean square boundedness of the gradients $\nabla f_i(\mathbf{x}_i(k))$. In subsection \ref{sec:dis_bound}, the mean square error of the disagreements of a node's optimizer estimate with respect to the network averaged optimizer estimate ,i.e., $\overline{\mathbf{x}}(k):=\frac{1}{N}\sum_{i=1}^N \mathbf{x}_i(k)$, is characterized in terms of $k$ and the algorithm parameters. Finally, subsection \ref{sec:opt_gap} characterizes the optimality gap of the networked average optimizer estimate sequence with respect to the optimizer of \eqref{eq:opt_problem} and on combining the result from subsection \ref{sec:dis_bound}, the result follows.

\subsection{Mean square boundedness of the iterate sequence}
\label{sec:bound}
This subsection shows the mean square boundedness of the algorithm iterates.
\begin{Lemma}
\label{Lemma-MSS-BDD}
Let the hypotheses of Theorem \ref{theorem-1} hold. In addition assume that, $\left\|\nabla F(\mathbf{1}_{N}\otimes\mathbf{x}^{\ast})\right\|$ is bounded.
Then, we have,
\begin{align*}
&\mathbb{E}\left[\left\|\mathbf{x}(k)-\mathbf{x}^{o}\right\|^2\right]\le q_{k_0}(N,d,\alpha_0,c_0)\nonumber\\&+\frac{\sqrt{Nd}(L-\mu)\alpha_0c_0}{\delta}+\frac{Nd(L-\mu)^{2}\alpha_0^2c_0^2}{1+2\delta}\nonumber\\&+\frac{\alpha_0^2\left(2c_{f}N\left\|\mathbf{x}^{o}\right\|^{2}+N\sigma^{2}\right)}{c_0^{2}(1-2\delta)}+4\frac{\|\nabla F(\mathbf{x}^{o})\|^2}{\mu^2}\nonumber\\&\doteq q_{\infty}(N,d,\alpha_0,c_0),
\end{align*}
where $\mathbb{E}\left[\left\|\mathbf{x}(k_0)-\mathbf{x}^{o}\right\|^{2}\right] \le q_{k_0}(N,d,\alpha_0,c_0)$ and $k_{0}=\inf\left\{k~:~\frac{\mu}{2} > (L-\mu)\sqrt{Nd}c_{k}+\frac{2c_{f}\alpha_{k}}{c_{k}^{2}}\right\}.$
\end{Lemma}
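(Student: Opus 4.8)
The plan is to track the stacked disagreement-from-optimum $\mathbf{z}(k) := \mathbf{x}(k) - \mathbf{x}^{o}$, where $\mathbf{x}^{o} = \mathbf{1}_N \otimes \mathbf{x}^{\ast}$ is the stacked optimizer. The key structural fact I would exploit is that every graph Laplacian annihilates $\mathbf{1}_N$, so $\mathbf{W}_k \mathbf{x}^{o} = \mathbf{x}^{o}$, and the update \eqref{eq:update_rule} becomes
\begin{align*}
\mathbf{z}(k+1) = \mathbf{W}_k \mathbf{z}(k) - \alpha_k\left( \nabla F(\mathbf{x}(k)) + c_k \mathbf{P}(\mathbf{x}(k)) + \tfrac{1}{c_k}\mathbf{v}(k) \right).
\end{align*}
I would then expand $\|\mathbf{z}(k+1)\|^2$ and take $\mathbb{E}[\cdot \mid \mathcal{F}_k]$. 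Since $\mathbf{L}(k)$ and $\mathbf{v}(k)$ are independent of $\mathcal{F}_k$ and $\mathbb{E}[\mathbf{v}(k)\mid\mathcal{F}_k]=0$ by Assumption~\ref{as:4}, the noise cross-term vanishes, and the mixing operator enters only through $\mathbb{E}[\mathbf{W}_k]=(\mathbf{I}-\beta_k\overline{\mathbf{L}})\otimes\mathbf{I}_d$ in the cross-term and through $\mathbb{E}[\mathbf{W}_k^2]$ in the quadratic term.

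For the quadratic term I would use $\mathbb{E}[\mathbf{W}_k^2]=(\mathbf{I}-2\beta_k\overline{\mathbf{L}}+\beta_k^2\mathbb{E}[\mathbf{L}(k)^2])\otimes\mathbf{I}_d$; because both $\overline{\mathbf{L}}$ and $\mathbb{E}[\mathbf{L}(k)^2]$ annihilate $\mathbf{1}_N$ while $\lambda_2(\overline{\mathbf{L}})>0$ (Assumption~\ref{as:2}), the term $-2\beta_k\overline{\mathbf{L}}$ dominates $\beta_k^2\mathbb{E}[\mathbf{L}(k)^2]$ on $\mathbf{1}_N^{\perp}$ once $\beta_k$ is small, giving $\mathbb{E}[\mathbf{W}_k^2]\preceq\mathbf{I}_{Nd}$ and hence $\mathbb{E}[\|\mathbf{W}_k\mathbf{z}(k)\|^2\mid\mathcal{F}_k]\le\|\mathbf{z}(k)\|^2$ beyond a finite index (absorbed into $k_0$). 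For the cross-term I would split $\nabla F(\mathbf{x}(k)) = (\nabla F(\mathbf{x}(k))-\nabla F(\mathbf{x}^{o}))+\nabla F(\mathbf{x}^{o})$; strong convexity (Assumption~\ref{as:1}) yields $\langle \mathbf{z}(k),\nabla F(\mathbf{x}(k))-\nabla F(\mathbf{x}^{o})\rangle \ge \mu\|\mathbf{z}(k)\|^2$, producing a $-2\mu\alpha_k\|\mathbf{z}(k)\|^2$ drift, while a Young split of the $\nabla F(\mathbf{x}^{o})$ piece contributes $\tfrac{\alpha_k}{\mu}\|\nabla F(\mathbf{x}^{o})\|^2$ and reduces the drift to $-\mu\alpha_k\|\mathbf{z}(k)\|^2$. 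The bias term is controlled via $\|\mathbf{P}(\mathbf{x}(k))\|\le \sqrt{Nd}(L-\mu)/2$ (from the Hessian bounds), and the conditional second moment of the noise by Assumption~\ref{as:4} together with $\|\mathbf{x}(k)\|^2\le 2\|\mathbf{z}(k)\|^2+2\|\mathbf{x}^{o}\|^2$.

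Collecting terms, the coefficient of $\|\mathbf{z}(k)\|^2$ is $1-\mu\alpha_k$ plus positive contributions of order $\alpha_k c_k (L-\mu)\sqrt{Nd}$ (bias) and $\alpha_k^2 c_f/c_k^2$ (noise). This is precisely where the threshold $k_0$ enters: its defining inequality $\tfrac{\mu}{2}>(L-\mu)\sqrt{Nd}\,c_k+\tfrac{2c_f\alpha_k}{c_k^2}$ forces these contributions to be at most $\tfrac{\mu}{2}\alpha_k\|\mathbf{z}(k)\|^2$, so for $k\ge k_0$ I obtain the genuine one-step contraction
\begin{align*}
\mathbb{E}\!\left[\|\mathbf{z}(k+1)\|^2\mid\mathcal{F}_k\right] \le \left(1-\tfrac{\mu}{2}\alpha_k\right)\|\mathbf{z}(k)\|^2 + \tfrac{\alpha_k}{\mu}\|\nabla F(\mathbf{x}^{o})\|^2 + s_k,
\end{align*}
where $s_k$ gathers the remaining deterministic terms of orders $\alpha_k c_k$, $\alpha_k^2 c_k^2$ and $\alpha_k^2/c_k^2$. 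Taking full expectations and iterating this scalar recursion, I would use $\sum_k \alpha_k c_k<\infty$ (sum $\sim 1/\delta$), $\sum_k\alpha_k^2 c_k^2<\infty$ (sum $\sim 1/(1+2\delta)$) and $\sum_k\alpha_k^2/c_k^2<\infty$ (Assumption~\ref{as:3}, requiring $\delta<1/2$, sum $\sim 1/(1-2\delta)$), while the non-summable term $\tfrac{\alpha_k}{\mu}\|\nabla F(\mathbf{x}^{o})\|^2$ balances against the $\tfrac{\mu}{2}\alpha_k$ contraction to leave the steady-state level $\tfrac{4}{\mu^2}\|\nabla F(\mathbf{x}^{o})\|^2$. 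The resulting supremum bound reproduces each term in $q_{\infty}(N,d,\alpha_0,c_0)$, with $q_{k_0}$ as the initial value $\mathbb{E}[\|\mathbf{z}(k_0)\|^2]$.

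The hardest part will be the bookkeeping around the random mixing operator $\mathbf{W}_k$: I must confirm that the quadratic and cross terms it generates never yield a $\|\mathbf{z}(k)\|^2$-coefficient worse than $1$ up to $o(\alpha_k)$ perturbations, which rests on the two facts $\mathbb{E}[\mathbf{W}_k^2]\preceq\mathbf{I}_{Nd}$ for large $k$ and $\mathbf{W}_k\mathbf{x}^{o}=\mathbf{x}^{o}$; the $O(\alpha_k\beta_k)$ corrections from $\mathbb{E}[\mathbf{W}_k]\ne\mathbf{I}$ and the $O(\alpha_k^2)$ term from $\|\nabla F(\mathbf{x}(k))\|^2$ must be shown to be dominated by the $\tfrac{\mu}{2}\alpha_k$ contraction once $k\ge k_0$. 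The remaining effort is the routine summability accounting that matches the closed-form constants appearing in $q_{\infty}$.
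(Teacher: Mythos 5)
Your proposal is correct and follows essentially the same route as the paper's proof: subtract $\mathbf{x}^{o}=\mathbf{1}_N\otimes\mathbf{x}^{\ast}$ using $\mathbf{W}_k\mathbf{x}^{o}=\mathbf{x}^{o}$, form a conditional one-step recursion in which the noise cross-term vanishes, the bias contributes $O(\alpha_k c_k)(1+\|\mathbf{z}(k)\|^2)$ via $\|\mathbf{P}(\mathbf{x}(k))\|\le\sqrt{Nd}(L-\mu)/2$, the noise contributes $O(\alpha_k^2/c_k^2)$ via Assumption~\ref{as:4}, and the $\nabla F(\mathbf{x}^{o})$ piece is split off by the Young-type inequality with $\theta=\mu\alpha_k$; then invoke the defining inequality of $k_0$ to get a $(1-\mu\alpha_k/2)$ contraction and finish by iterating and summing the three summable series (yielding the $1/\delta$, $1/(1+2\delta)$, $1/(1-2\delta)$ factors) while balancing $\alpha_k\|\nabla F(\mathbf{x}^{o})\|^2/\mu$ against the contraction to obtain $4\|\nabla F(\mathbf{x}^{o})\|^2/\mu^2$. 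The one substantive divergence is local and technical: you control the random mixing through the second moment, $\mathbb{E}[\mathbf{W}_k^2]\preceq\mathbf{I}$ on the relevant subspace for large $k$, and extract the drift from the strong-monotonicity inner product $\langle\mathbf{z}(k),\nabla F(\mathbf{x}(k))-\nabla F(\mathbf{x}^{o})\rangle\ge\mu\|\mathbf{z}(k)\|^2$, whereas the paper writes $\nabla F(\mathbf{x}(k))-\nabla F(\mathbf{x}^{o})=\mathbf{H}_k\mathbf{z}(k)$ via the integrated Hessian and uses the pathwise spectral bound $\|\mathbf{W}_k-\alpha_k\mathbf{H}_k\|\le1-\mu\alpha_k$. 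Both devices work; yours needs only gradient monotonicity for that step and avoids a pathwise norm bound on a random matrix, but it obliges you to show explicitly (as you acknowledge) that the residual $O(\alpha_k\beta_k)\|\mathbf{z}(k)\|^2$ term from $\mathbb{E}[\mathbf{W}_k]\ne\mathbf{I}$ and the $O(\alpha_k^2)\|\mathbf{z}(k)\|^2$ term from squaring the drift are dominated by the $\mu\alpha_k/2$ margin --- conditions not literally contained in the stated definition of $k_0$, so your threshold (and the resulting constants in $q_\infty$) would be mildly perturbed relative to the paper's. This is a bookkeeping discrepancy, not a gap.
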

\begin{proof}
\begin{align}
\label{eq:update_rule1}
&\mathbf{x}(k+1) = \mathbf{W}_{k}\mathbf{x}(k)\nonumber\\&-\frac{\alpha_{k}}{c_{k}}\left(c_{k}\nabla F(\mathbf{x}(k))+c_{k}^{2}P(\mathbf{x}(k))+\mathbf{v}(k)\right).
\end{align}
Denote $\mathbf{x}^{o} = \mathbf{1}_{N}\otimes x^{\ast}$.
Then, we have,
\begin{align}
\label{eq:update_rule2}
&\mathbf{x}(k+1)-\mathbf{x}^{o} = \mathbf{W}_{k}(\mathbf{x}(k)-\mathbf{x}^{o})\nonumber\\&-\alpha_{k}\left(\nabla F(\mathbf{x}(k))-\nabla F(\mathbf{x}^{o})\right)\nonumber\\&-\frac{\alpha_{k}}{c_{k}}\mathbf{v}(k)-\alpha_{k}\nabla F(\mathbf{x}^{o})-\alpha_{k}c_{k}\mathbf{P}(\mathbf{x}(k)).
\end{align}
By Leibnitz rule, we have,
\begin{align}
\label{eq:mvt}
&\nabla F(\mathbf{x}(k))-\nabla F(\mathbf{x}^{o}) \nonumber\\&= \left[\int_{s=0}^{1}\nabla^{2}F\left(\mathbf{x}^{o}+s(\mathbf{x}(k)-\mathbf{x}^{o})\right)ds\right]\left(\mathbf{x}(k)-\mathbf{x}^{o}\right)\nonumber\\
&=\mathbf{H}_{k}\left(\mathbf{x}(k)-\mathbf{x}^{o}\right).
\end{align}
By Lipschitz continuity of the gradients and strong convexity of $f(\cdot)$, we have that $L\mathbf{I}\succcurlyeq\mathbf{H}_{k}\succcurlyeq\mu\mathbf{I}$. 
Denote by $\boldsymbol{\zeta}(k) = \mathbf{x}(k)-\mathbf{x}^{o}$
and by $\boldsymbol{\xi}(k) = \left(\mathbf{W}_{k}-\alpha_{k}\mathbf{H}_{k}\right)(\mathbf{x}(k)-\mathbf{x}^{o})
-\alpha_k \,\nabla F(\mathbf{x}^{o})$.
Then, there holds:
{\small\begin{align}
&\mathbb{E}[\,\|\boldsymbol{\zeta}(k+1)\|^2 \,|\,\mathcal{F}_k\,]
\leq
\|\boldsymbol{\xi}(k)\|^2 \nonumber \\
&-2 \alpha_k \,{\boldsymbol{\xi}}(k)^\top
\mathbb{E}[\,\mathbf{v}(k) \,|\,\mathcal{F}_k\,] +
\alpha_k^2 \,\mathbb{E}[\,\|\mathbf{v}(k)\|^2 \,|\,\mathcal{F}_k\,] \nonumber \\
&+\alpha_{k}^{2}c_{k}^{2}\mathbf{P}^{\top}(\mathbf{x}(k))\mathbf{P}(\mathbf{x}(k))-2\alpha_{k}c_{k}\mathbf{P}^{\top}(\mathbf{x}(k)){\boldsymbol{\xi}}(k)\nonumber\\
&+\mathbf{P}\left(\mathbf{x}(k)\right)^\top \mathbb{E}\left[\mathbf{v}(k)|\mathcal{F}_k\right].
\label{eqn-combine-1}
\end{align}}
We use the following inequalities:
{\small\begin{align}
\label{eq:cross_term}
&-2\alpha_{k}c_{k}\mathbf{P}^{\top}(\mathbf{x}(k))\left(\mathbf{W}_{k}-\alpha_{k}\mathbf{H}_{k}\right)(\mathbf{x}(k)-\mathbf{x}^{o})\nonumber\\&\le 2\alpha_{k}c_{k}\left\|\mathbf{P}(\mathbf{x}(k))\right\|\left\|\mathbf{W}_{k}-\alpha_{k}\mathbf{H}_{k}\right\|\left\|\mathbf{x}(k)-\mathbf{x}^{o}\right\|\nonumber\\
&\le \sqrt{Nd}(L-\mu)\alpha_{k}c_{k}\left(1-\mu\alpha_{k}\right)\left(1+\left\|\mathbf{x}(k)-\mathbf{x}^{o}\right\|^{2}\right)\nonumber\\
&\le \sqrt{Nd}(L-\mu)\alpha_{k}c_{k}+\sqrt{Nd}(L-\mu)\alpha_{k}c_{k}\left\|\mathbf{x}(k)-\mathbf{x}^{o}\right\|^{2},
\end{align}}
{\small\begin{align}
\label{eq:bias}
\alpha_{k}^{2}c_{k}^{2}\mathbf{P}^{\top}(\mathbf{x}(k))\mathbf{P}(\mathbf{x}(k)) \le Nd\left(L-\mu\right)^{2}\alpha_{k}^{2}c_{k}^{2},
\end{align}}
and 
{\small\begin{align}
\label{eq:variance}
&\frac{\alpha_{k}^{2}}{c_{k}^{2}}\mathbb{E}\left[\left\|\mathbf{v}(k)\right\|^{2}|\mathcal{F}_{k}\right]\le \frac{\alpha_{k}^{2}}{c_{k}^{2}}c_{f}N\left\|\mathbf{x}(k)\right\|^{2}+\frac{\alpha_{k}^{2}}{c_{k}^{2}}N\sigma^{2}\nonumber\\
&\le 2\frac{\alpha_{k}^{2}}{c_{k}^{2}}c_{f}\left\|\mathbf{x}(k)-\mathbf{x}^{o}\right\|^{2}+\frac{\alpha_{k}^{2}}{c_{k}^{2}}\left(2c_{f}\left\|\mathbf{x}^{o}\right\|^{2}+N\sigma^{2}\right).
\end{align}}
Then from \eqref{eqn-combine-1}, we have,
\begin{align}
\label{eq:eqn-combine-2}
&\mathbb{E}[\,\|\boldsymbol{\zeta}(k+1)\|^2 \,|\,\mathcal{F}_k\,]\le \|\boldsymbol{\xi}(k)\|^2\nonumber\\
&+\sqrt{Nd}(L-\mu)\alpha_{k}c_{k}\|\boldsymbol{\zeta}(k)\|^2+ 2\frac{\alpha_{k}^{2}}{c_{k}^{2}}c_{f}\|\boldsymbol{\zeta}(k)\|^2\nonumber\\
&+\frac{\alpha_{k}^{2}}{c_{k}^{2}}\left(2c_{f}\left\|\mathbf{x}^{o}\right\|^{2}+N\sigma^{2}\right)+\sqrt{Nd}(L-\mu)\alpha_{k}c_{k}\nonumber\\
&+Nd\left(L-\mu\right)^{2}\alpha_{k}^{2}c_{k}^{2}
\end{align}

We next bound $\|\boldsymbol{\xi}(k)\|^2$.
Note that
$\|\mathbf{W}_k -\alpha_k \,\boldsymbol{H}_k\| \leq 1-\mu\,\alpha_k$.
Therefore, we have:
\begin{equation}
\label{eqn-xi-zeta}
\|\boldsymbol{\xi}(k)\| \leq (1-\mu\,\alpha_k)\,\|\boldsymbol{\zeta}(k)\|
+ \alpha_k\, \|\nabla F(\mathbf{x}^{o})\|.
\end{equation}
We now
use the following inequality:
\begin{align}
\label{eq:cool_ineq_1}
(a+b)^{2} \leq \left(1+\theta\right)a^{2}+\left(1+\frac{1}{\theta}\right)b^{2},
\end{align}
for any $a,b \in \mathbb{R}$ and $\theta > 0$.
We set $\theta=\mu\alpha_{k}$.
Using the inequality \eqref{eq:cool_ineq_1} in \eqref{eqn-xi-zeta}, we have:
{\small\begin{align}
	\label{eq:update_rule4}
	&\left\|\boldsymbol{\xi}(k)\right\|^{2}
	\le \left(1+\mu\alpha_{k}\right)(1-\alpha_k\mu)^{2}\left\|\boldsymbol{\zeta}(k)\right\|^2\nonumber\\
	&+\left(1+\frac{1}{\mu\alpha_{k}}\right)\alpha_{k}^{2}\|\nabla F(\mathbf{x}^{o})\|^2\nonumber\\
	&\le (1-\alpha_k\mu)\left\|\boldsymbol{\zeta}(k)\right\|^2+2\frac{\alpha_k}{\mu}\|\nabla F(\mathbf{x}^{o})\|^2.
\end{align}}

Using \eqref{eq:update_rule4} in \eqref{eq:eqn-combine-2}, we have,
{\small\begin{align}
\label{eq:combine-3}
&\mathbb{E}[\,\|\boldsymbol{\zeta}(k+1)\|^2 \,|\,\mathcal{F}_k\,]\le+2\frac{\alpha_k}{\mu}\|\nabla F(\mathbf{x}^{o})\|^2+ \left\|\boldsymbol{\zeta}(k)\right\|^2\nonumber\\&\times\left(1-\alpha_k\mu+\sqrt{Nd}(L-\mu)\alpha_{k}c_{k}+2\frac{\alpha_{k}^{2}}{c_{k}^{2}}c_{f}\right)\nonumber\\
&+\frac{\alpha_{k}^{2}}{c_{k}^{2}}\left(2c_{f}\left\|\mathbf{x}^{o}\right\|^{2}+N\sigma^{2}\right)+\sqrt{Nd}(L-\mu)\alpha_{k}c_{k}\nonumber\\
&+Nd\left(L-\mu\right)^{2}\alpha_{k}^{2}c_{k}^{2}.
\end{align}}

\noindent Define $k_0$ as follows:
\begin{align*}
k_{0}=\inf\left\{k~:~\frac{\mu}{2} > (L-\mu)\sqrt{Nd}c_{k}+\frac{2c_{f}\alpha_{k}}{c_{k}^{2}}\right\}.
\end{align*}
It is to be noted that $k_0$ is necessarily finite as $c_{k}\to 0$ and $\alpha_{k}c_{k}^{-2}\to 0$ as $k\to\infty$. \begin{proposition}
	\label{prop:res1}
	Let the hypotheses of Theorem \ref{theorem-1} hold. Then, we have $\forall k \geq k_0$,
	\begin{align*}
	&\mathbb{E}\left[\|\boldsymbol{\zeta}(k+1)\|^2 \right] \le q_{k_0}(N,d,\alpha_0,c_0)\nonumber\\&+\frac{\sqrt{Nd}(L-\mu)\alpha_0c_0}{\delta}+\frac{Nd(L-\mu)^{2}\alpha_0^2c_0^2}{1+2\delta}\nonumber\\&+\frac{\alpha_0^2\left(2c_{f}N\left\|\mathbf{x}^{o}\right\|^{2}+N\sigma^{2}\right)}{c_0^{2}(1-2\delta)}+4\frac{\|\nabla F(\mathbf{x}^{o})\|^2}{\mu^2}\nonumber\\&\doteq q_{\infty}(N,d,\alpha_0,c_0)
	\end{align*}
\end{proposition}
\begin{proof}
	{\small\begin{align}
		\label{eq:combine-4}
		&\mathbb{E}\left[\|\boldsymbol{\zeta}(k+1)\|^2 \right] \le \prod_{l=k_0}^{k}\left(1-\frac{\mu\alpha_{l}}{2}\right)\mathbb{E}\left[\|\boldsymbol{\zeta}(k_0)\|^2\right]\nonumber\\
		&+4\frac{\|\nabla F(\mathbf{x}^{o})\|^2}{\mu^2}\sum_{l=k_0}^{k}\left(\prod_{m=l+1}^{k}\left(1-\frac{\mu\alpha_{m}}{2}\right)-\prod_{m=l}^{k}\left(1-\frac{\mu\alpha_{m}}{2}\right)\right)
		\nonumber\\&+\sum_{l=k_0}^{k}\left(\frac{\alpha_{l}^{2}}{c_{l}^{2}}\left(2c_{f}\left\|\mathbf{x}^{o}\right\|^{2}+N\sigma^{2}\right)\right.\nonumber\\&\left.+Nd\left(L-\mu\right)^{2}\alpha_{l}^{2}c_{l}^{2}+\sqrt{Nd}(L-\mu)\alpha_{l}c_{l}\right)\nonumber\\
		&\Rightarrow \mathbb{E}\left[\|\boldsymbol{\zeta}(k+1)\|^2 \right] \le q_{k_0}(N,d,\alpha_0,c_0)\nonumber\\&+\frac{\sqrt{Nd}(L-\mu)\alpha_0c_0}{\delta}+\frac{Nd(L-\mu)^{2}\alpha_0^2c_0^2}{1+2\delta}\nonumber\\&+\frac{\alpha_0^2\left(2c_{f}N\left\|\mathbf{x}^{o}\right\|^{2}+N\sigma^{2}\right)}{c_0^{2}(1-2\delta)}+4\frac{\|\nabla F(\mathbf{x}^{o})\|^2}{\mu^2}\nonumber\\&\doteq q_{\infty}(N,d,\alpha_0,c_0),
		\end{align}}
\end{proof}
\noindent From proposition \ref{prop:res1}, we have that $\mathbb{E}\left[\left\|\mathbf{x}(k+1)-\mathbf{x}^{o}\right\|^{2}\right]$ is finite and bounded from above, where $\mathbb{E}\left[\left\|\mathbf{x}(k_0)-\mathbf{x}^{o}\right\|^{2}\right] \le q_{k_0}(N,d,\alpha_0,c_0)$. From the boundedness of $\mathbb{E}\left[\left\|\mathbf{x}(k)-\mathbf{x}^{o}\right\|^2\right]$, we have also established the boundedness of $\mathbb{E}\left[\left\|\nabla F(\mathbf{x}(k))\right\|^2\right]$ and $\mathbb{E}\left[\left\|\mathbf{x}(k)\right\|^2\right]$. 

\end{proof}

\noindent With the above development in place, we can bound the variance of the noise process $\{\mathbf{v}(k)\}$ as follows:
\begin{align}
\label{eq:noise_variance_condition}
&\mathbb{E}\left[\left\|\mathbf{v}(k)\right\|^{2}|\mathcal{F}_{k}\right]\le 0.5\mathbb{E}\left[\left\|\widehat{\mathbf{v}}^{+}(k)\right\|^{2}|\mathcal{F}_{k}\right] \nonumber\\&+0.5\mathbb{E}\left[\left\|\widehat{\mathbf{v}}^{-}(k)\right\|^{2}|\mathcal{F}_{k}\right]\nonumber\\&\le 2c_{f}q_{\infty}(N,d,\alpha_0,c_0)+2N\underbrace{\left(\sigma^{2}+\left\|\mathbf{x}^{\ast}\right\|^{2}\right)}_{\text{$\sigma_{1}^{2}$}}.
\end{align}

\subsection{Disagreement Bounds}
\label{sec:dis_bound}
\noindent We now study the disagreement of the optimizer sequence $\{\mathbf{x}_{i}(k)\}$ at a node $i$ with respect to the~(hypothetically available) network averaged optimizer sequence, i.e., $\overline{\mathbf{x}}(k)=\frac{1}{N}\sum_{i=1}^{N}\mathbf{x}_{i}(k)$. Define the disagreement at the $i$-th node as $\widetilde{\mathbf{x}}_{i}(k)=\mathbf{x}_{i}(k)-\overline{\mathbf{x}}(k)$. The vectorized version of the disagreements $\widetilde{\mathbf{x}}_{i}(k),~i=1,\cdots,N$, can then be written as $\widetilde{\mathbf{x}}(k)=\left(\mathbf{I}-\mathbf{J}\right)\mathbf{x}(k)$, where $\mathbf{J}=\frac{1}{N}\left(\mathbf{1}_{N}\otimes\mathbf{I}_{d}\right)\left(\mathbf{1}_{N}\otimes\mathbf{I}_{d}\right)^{\top}=\frac{1}{N}\mathbf{1}_{N}\mathbf{1}_{N}^{\top}\otimes\mathbf{I}_{d}$.
We have the following Lemma:
\begin{Lemma}
	\label{lemma-disag-bound}
	Let the hypotheses of Theorem \ref{theorem-1} hold. Then, we have $\forall k \geq k_3$
	\begin{align*}
	&\mathbb{E}\left[\left\|\widetilde{\mathbf{x}}(k+1)\right\|^{2}\right] \le Q_{k}+ \frac{4\Delta_{1,\infty}\alpha_{0}^{2}}{p_{\mathcal{L}}^{2}\beta_0^{2}c_0^2(k+1)^{2-2\tau-2\delta}}\nonumber\\&=O\left(\frac{1}{k^{2-2\delta-2\tau}}\right),
	\end{align*}
	where $Q_k$ is a term which decays faster than $(k+1)^{-2+2\tau+2\delta}$, $k_{3} = \inf\left\{k~:~\beta_{k}^{2}<\frac{1}{N^{2}}, s^{2}(k) < 1\right\}$ and $p_{\mathcal{L}}=\underline{p}\lambda_{2}\left(\overline{\mathbf{L}}\right)/|\mathcal{L}|$.
\end{Lemma}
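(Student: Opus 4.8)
The plan is to convert the global update \eqref{eq:update_rule} into a self-contained recursion for the disagreement $\widetilde{\mathbf{x}}(k)=(\mathbf{I}-\mathbf{J})\mathbf{x}(k)$ and then treat it as a perturbed consensus iteration. First I would left-multiply \eqref{eq:update_rule} by $(\mathbf{I}-\mathbf{J})$. Since each realization $\mathbf{L}(k)$ is a symmetric Laplacian it annihilates the consensus direction $\mathbf{1}_N$, so $\mathbf{W}_k=(\mathbf{I}-\beta_k\mathbf{L}(k))\otimes\mathbf{I}_d$ commutes with $\mathbf{J}$ and leaves the subspace $\ran(\mathbf{I}-\mathbf{J})$ invariant; hence $(\mathbf{I}-\mathbf{J})\mathbf{W}_k\mathbf{x}(k)=\mathbf{W}_k\widetilde{\mathbf{x}}(k)$ and I obtain
\[
\widetilde{\mathbf{x}}(k+1)=\mathbf{W}_k\widetilde{\mathbf{x}}(k)-\alpha_k(\mathbf{I}-\mathbf{J})\mathbf{G}(k),\quad \mathbf{G}(k):=\nabla F(\mathbf{x}(k))+c_k\mathbf{P}(\mathbf{x}(k))+\tfrac{1}{c_k}\mathbf{v}(k).
\]

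Next I would bound $\mathbb{E}[\|\widetilde{\mathbf{x}}(k+1)\|^2\mid\mathcal{F}_k]$. Because $\mathbf{W}_k$ is built from $\mathbf{L}(k)$, which is independent of $\mathcal{F}_k$, whereas $\widetilde{\mathbf{x}}(k)$ and the deterministic part of $\mathbf{G}(k)$ are $\mathcal{F}_k$-measurable, I would apply the split $\|\mathbf{a}-\mathbf{b}\|^2\le(1+\theta_k)\|\mathbf{a}\|^2+(1+\theta_k^{-1})\|\mathbf{b}\|^2$ with $\mathbf{a}=\mathbf{W}_k\widetilde{\mathbf{x}}(k)$, $\mathbf{b}=\alpha_k(\mathbf{I}-\mathbf{J})\mathbf{G}(k)$, and tuning $\theta_k\asymp p_{\mathcal{L}}\beta_k$. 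The central estimate is the contraction $\mathbb{E}[\|\mathbf{W}_k\widetilde{\mathbf{x}}(k)\|^2\mid\mathcal{F}_k]=\widetilde{\mathbf{x}}(k)^\top\big(\mathbb{E}[(\mathbf{I}-\beta_k\mathbf{L}(k))^2]\otimes\mathbf{I}_d\big)\widetilde{\mathbf{x}}(k)\le s^2(k)\|\widetilde{\mathbf{x}}(k)\|^2$, where $s^2(k)$ is the largest eigenvalue of $\mathbb{E}[(\mathbf{I}-\beta_k\mathbf{L}(k))^2]$ on $\mathbf{1}_N^\perp$. For $k\ge k_3$ the condition $\beta_k^2<1/N^2$ forces $0\preceq\beta_k\mathbf{L}(k)\prec\mathbf{I}$ almost surely, hence $(\mathbf{I}-\beta_k\mathbf{L}(k))^2\preceq\mathbf{I}-\beta_k\mathbf{L}(k)$; taking expectations and using Assumption~\ref{as:2} ($\lambda_2(\overline{\mathbf{L}})>0$) yields $s^2(k)\le 1-p_{\mathcal{L}}\beta_k<1$, which is exactly the second defining inequality of $k_3$, with $p_{\mathcal{L}}$ encoding the average connectivity $\lambda_2(\overline{\mathbf{L}})$ together with the support parameters $\underline{p},|\mathcal{L}|$. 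For the forcing term I would bound $\mathbb{E}[\|(\mathbf{I}-\mathbf{J})\mathbf{G}(k)\|^2\mid\mathcal{F}_k]$ using that $\mathbb{E}\|\nabla F(\mathbf{x}(k))\|^2$ is bounded by Lemma~\ref{Lemma-MSS-BDD}, that $\|\mathbf{P}(\mathbf{x}(k))\|\le\sqrt{Nd}(L-\mu)$, and the noise bound \eqref{eq:noise_variance_condition}, so that the dominant piece is the variance term of order $\Delta_{1,\infty}/c_k^2$.

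Combining these and taking total expectations yields the scalar recursion
\[
\mathbb{E}\|\widetilde{\mathbf{x}}(k+1)\|^2\le\Big(1-\tfrac{p_{\mathcal{L}}\beta_k}{2}\Big)\mathbb{E}\|\widetilde{\mathbf{x}}(k)\|^2+\frac{C\,\alpha_k^2}{p_{\mathcal{L}}\beta_k c_k^2},
\]
where the extra $\beta_k^{-1}$ is precisely the $(1+\theta_k^{-1})\asymp(p_{\mathcal{L}}\beta_k)^{-1}$ factor multiplying the variance, while the $O(1)$ gradient contribution and the $O(c_k^2)$ bias contribution enter only through strictly faster-decaying forcing. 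Finally I would solve this recursion using the standard stochastic-approximation lemma for products $\prod_{m}(1-\tfrac{p_{\mathcal{L}}\beta_m}{2})$: since $\sum_k\beta_k=\infty$ the homogeneous term decays faster than any polynomial, and the particular solution balances at $\mathbb{E}\|\widetilde{\mathbf{x}}(k)\|^2\asymp \frac{\alpha_k^2}{p_{\mathcal{L}}^2\beta_k^2 c_k^2}\Delta_{1,\infty}$, i.e.\ $O((k+1)^{-(2-2\tau-2\delta)})$ with the stated constant $\tfrac{4\Delta_{1,\infty}\alpha_0^2}{p_{\mathcal{L}}^2\beta_0^2 c_0^2}$; the homogeneous part and the gradient/bias forcing (of orders $(k+1)^{-(2-2\tau)}$ and $(k+1)^{-(2-2\tau+2\delta)}$ respectively) decay strictly faster and are collected into $Q_k$.

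I expect the main obstacle to be the contraction estimate $s^2(k)\le 1-p_{\mathcal{L}}\beta_k$ over the random network: one must control the second-moment operator $\mathbb{E}[(\mathbf{I}-\beta_k\mathbf{L}(k))^2]$ on $\mathbf{1}_N^\perp$ uniformly, confirm that the $\beta_k^2$ term is dominated once $k\ge k_3$, and carry the finite-support quantities $\underline{p},|\mathcal{L}|$ through to the constant $p_{\mathcal{L}}=\underline{p}\lambda_2(\overline{\mathbf{L}})/|\mathcal{L}|$. A secondary, purely bookkeeping difficulty is verifying that every sub-dominant forcing contribution decays faster than $(k+1)^{-(2-2\tau-2\delta)}$ so that lumping them into $Q_k$ is justified.
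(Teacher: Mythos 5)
Your proposal is correct and follows the paper's overall skeleton --- project the update onto $\ran(\mathbf{I}-\mathbf{J})$, apply the weighted Young split $(a+b)^2\le(1+\theta_k)a^2+(1+\theta_k^{-1})b^2$ with $\theta_k\asymp s(k)$, bound the forcing term by $\Delta_{1,\infty}\alpha_k^2/c_k^2$ via Lemma~\ref{Lemma-MSS-BDD} and \eqref{eq:noise_variance_condition}, and solve the resulting scalar recursion so that the particular solution $\asymp \alpha_k^2\Delta_{1,\infty}/(s^2(k)c_k^2)$ dominates --- but it diverges from the paper at the one genuinely technical step, the consensus contraction. The paper obtains \eqref{eq:dis4} by importing Lemma 4.4 of \cite{kar2013distributed}, which furnishes a \emph{pathwise} bound $\|\widetilde{\mathbf{W}}_k\widetilde{\mathbf{x}}(k)\|\le(1-r(k))\|\widetilde{\mathbf{x}}(k)\|$ with a random, $[0,1]$-valued coefficient whose conditional mean is at least $s(k)=\underline{p}\,\lambda_2(\overline{\mathbf{L}})\beta_k/|\mathcal{L}|$; the finite-support quantities $\underline{p}$ and $|\mathcal{L}|$ enter precisely because that lemma works realization by realization over the finite set of possible Laplacians. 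You instead bound the second-moment operator directly: $\mathbb{E}[\|\mathbf{W}_k\widetilde{\mathbf{x}}(k)\|^2\,|\,\mathcal{F}_k]=\widetilde{\mathbf{x}}(k)^\top\bigl(\mathbb{E}[(\mathbf{I}-\beta_k\mathbf{L}(k))^2]\otimes\mathbf{I}_d\bigr)\widetilde{\mathbf{x}}(k)$, use $\beta_k<1/N\Rightarrow 0\preceq\beta_k\mathbf{L}(k)\preceq\mathbf{I}$ (since $\lambda_{\max}(\mathbf{L}(k))\le N$) to get $(\mathbf{I}-\beta_k\mathbf{L}(k))^2\preceq\mathbf{I}-\beta_k\mathbf{L}(k)$, and then pass to the mean Laplacian on $\mathbf{1}_N^\perp$. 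This is self-contained (no external lemma), and it actually yields the sharper contraction $s^2(k)\le 1-\beta_k\lambda_2(\overline{\mathbf{L}})$, of which the paper's $1-p_{\mathcal{L}}\beta_k$ is a weakening; the only thing the paper's route buys is an almost-sure, per-realization statement, which is not needed here since only the mean square recursion is used. Your accounting of the sub-dominant contributions (homogeneous term $\exp(-c\,k^{1-\tau})$, gradient forcing of order $(k+1)^{-(2-2\tau)}$, bias forcing of order $(k+1)^{-(2-2\tau+2\delta)}$) matches the paper's $t_1,t_2,t_{32},t_4$ bookkeeping, so collecting them into $Q_k$ is justified exactly as in \eqref{eq:dis6.7}.
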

\noindent As detailed in the next Subsection, Lemma~\ref{lemma-disag-bound} plays a crucial role in providing a tight bound for the bias in the gradient estimates
according to which the global average $\overline{\mathbf{x}}(k)$ evolves.
\begin{proof}
The process $\{\widetilde{\mathbf{x}}(k)\}$ follows the recursion:
\begin{align}
\label{eq:dis1}
&\widetilde{\mathbf{x}}(k+1)=\widetilde{\mathbf{W}}_{k}\widetilde{\mathbf{x}}(k)\nonumber\\&-\frac{\alpha_{k}}{c_{k}}\left(\mathbf{I}-\mathbf{J}\right)\underbrace{\left(c_{k}\nabla F(\mathbf{x}(k))+\mathbf{v}(k)+c_{k}^{2}P\left(\mathbf{x}(k)\right)\right)}_{\text{$\mathbf{w}(k)$}},
\end{align}
where $\widetilde{\mathbf{W}}_{k} = \mathbf{W}_{k}-\mathbf{J}=\left(\mathbf{I}-\mathbf{L}_{k}\right)\otimes\mathbf{I}_{d}-\mathbf{J}$. 
Then, we have,
\begin{align}
\label{eq:dis2}
\left\|\widetilde{\mathbf{x}}(k+1)\right\| \le \left\|\widetilde{\mathbf{W}}_{k}\right\| \left\|\widetilde{\mathbf{x}}(k)\right\|+\frac{\alpha_{k}}{c_k}\left\|\mathbf{w}(k)\right\|.
\end{align}
We now invoke Lemma 4.4 from \cite{kar2013distributed}, which lets us conclude that under assumption \ref{as:2}~$\forall k \geq k_{1}$,
\begin{align}
\label{eq:dis4}
\left\|\widetilde{\mathbf{x}}(k+1)\right\| \le (1-r(k))\left\|\widetilde{\mathbf{x}}(k)\right\|+\frac{\alpha_{k}}{c_{k}}\left\|\mathbf{w}(k)\right\|,
\end{align}
where $r(k)\in[0,1]$ a.s. is a $\mathcal{F}_{k}$ adapted process and satisfies
\begin{align}
\label{eq:rk}
\mathbb{E}\left[r(k)|\mathcal{F}_{k}\right] \geq \beta_{k}\underline{p}\frac{\lambda_{2}\left(\overline{\mathbf{L}}\right)}{|\mathcal{L}|}\doteq s(k)~a.s.
\end{align} 
Let $p_{\mathcal{L}}=\underline{p}\frac{\lambda_{2}\left(\overline{\mathbf{L}}\right)}{|\mathcal{L}|}$. Moreover, $k_{1}$ is given by
{\small\begin{align}
\label{eq:k1}
k_1 = \inf\left\{k~:~\beta_{k}^{2}<\frac{1}{N^{2}}\right\}.
\end{align}}
Using \eqref{eq:cool_ineq_1} in \eqref{eq:dis4}, we have $\forall k \geq k_1$,
\begin{align}
\label{eq:dis5}
&\left\|\widetilde{\mathbf{x}}(k+1)\right\|^{2}\le \left(1+\theta_k\right)(1-r(k))^{2}\left\|\widetilde{\mathbf{x}}(k)\right\|^{2}\nonumber\\&+\left(1+\frac{1}{\theta_k}\right)\frac{\alpha_k^2}{c_{k}^{2}}\left\|\widetilde{\mathbf{w}}(k)\right\|^2,
\end{align}
for $\theta_k = s(k)$.
Then, we have,
\begin{align}
\label{eq:dis6}
&\mathbb{E}\left[\left\|\widetilde{\mathbf{x}}(k+1)\right\|^{2}|\mathcal{F}_{k}\right]\leq \left(1+\theta_k\right)(1-s(k))^{2}\left\|\widetilde{\mathbf{x}}(k)\right\|^{2}\nonumber\\&+\left(1+\frac{1}{\theta_k}\right)\frac{\alpha_k^2}{c_k^{2}}\mathbb{E}\left[\left\|\mathbf{w}(k)\right\|^{2}|\mathcal{F}_{k}\right],
\end{align}
where 
\begin{align}
\label{eq:wk_bound}
&\mathbb{E}\left[\left\|\mathbf{w}(k)\right\|^{2}|\mathcal{F}_{k}\right] \le 3c_{k}^{2}\left\|\nabla F(\mathbf{x}(k))\right\|^{2}+3\mathbb{E}\left[\left\|\mathbf{v}(k)\right\|^{2}|\mathcal{F}_{k}\right]\nonumber\\&+3c_{k}^{2}\left\|P\left(\mathbf{x}(k)\right)\right\|^{2}\nonumber\\
&\le 3c_{k}^{2}\left\|\nabla F(\mathbf{x}(k))\right\|^{2}+3c_{k}^{2}Nd(L-\mu)^{2}\nonumber\\&+6c_{f}q_{\infty}(N,d,\alpha_0,c_0)+6N\sigma_{1}^{2}\nonumber\\
&\Rightarrow \mathbb{E}\left[\left\|\mathbf{w}(k)\right\|^{2}\right] \le 3\left(2c_{f}+c_{k}^{2}L^{2}\right)q_{\infty}(N,d,\alpha_0,c_0)\nonumber\\&+3c_{k}^{2}Nd(L-\mu)^{2} +6N\sigma_{1}^{2}\nonumber\\
&= \underbrace{6c_{f}q_{\infty}(N,d,\alpha_0,c_0)+6N\sigma_1^{2}}_{\text{$\Delta_{1,\infty}$}}\nonumber\\&+\underbrace{3c_{k}^{2}Nd(L-\mu)^{2}+3c_{k}^{2}L^{2}q_{\infty}(N,d,\alpha_0,c_0)}_{\text{$c_{k}^{2}\Delta_{2,\infty}$}}\doteq\Delta_{k}\nonumber\\&\Rightarrow \mathbb{E}\left[\left\|\mathbf{w}(k)\right\|^{2}\right] < \infty.
\end{align}
With the above development in place, we then have,
\begin{align}
\label{eq:dis6.5}
&\mathbb{E}\left[\left\|\widetilde{\mathbf{x}}(k+1)\right\|^{2}\right]\leq \left(1+\theta_k\right)(1-s(k))^{2}\mathbb{E}\left[\left\|\widetilde{\mathbf{x}}(k)\right\|^{2}\right]\nonumber\\&+\left(1+\frac{1}{\theta_k}\right)\frac{\alpha_k^2}{c_{k}^{2}}\Delta_{k}.
\end{align}
\begin{proposition}
	\label{prop:res2}
	Let the hypotheses of Theorem \ref{theorem-1} hold. Then, we have $k\geq k_3=\max\{k_1,k_2\}$ where $k_2 = \inf\left\{k~:~s^{2}(k) < 1\right\}$,
	\begin{align*}
	\mathbb{E}\left[\left\|\widetilde{\mathbf{x}}(k+1)\right\|^{2}\right] \le Q_{k}+ \frac{4\Delta_{1,\infty}\alpha_{0}^{2}}{p_{\mathcal{L}}^{2}\beta_0^{2}c_0^2(k+1)^{2-2\tau-2\delta}}.
	\end{align*}
\end{proposition}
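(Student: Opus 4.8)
The plan is to treat the scalar recursion \eqref{eq:dis6.5} for $v_k := \mathbb{E}[\|\widetilde{\mathbf{x}}(k)\|^2]$ as a deterministic contraction-plus-innovation recursion of the form $v_{k+1} \le a_1(k)\,v_k + r_2(k)$, and to read off the polynomial decay rate from the balance between the contraction gain $a_1(k)$ and the innovation $r_2(k)$. First I would simplify the contraction coefficient: with the choice $\theta_k = s(k)$ one has the algebraic identity $(1+s(k))(1-s(k))^2 = (1-s(k))(1-s(k)^2) \le 1 - s(k)$, valid for all $k \ge k_2$ where $s(k) < 1$. Recalling $s(k) = \beta_k p_{\mathcal{L}} = \beta_0 p_{\mathcal{L}}/(k+1)^{\tau}$, this puts the recursion in the form $v_{k+1} \le (1 - s(k))\,v_k + r_2(k)$ with a contraction gain that decays as $(k+1)^{-\tau}$.

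Next I would control the innovation term $r_2(k) = (1 + 1/s(k))\frac{\alpha_k^2}{c_k^2}\Delta_k$. Substituting $\Delta_k = \Delta_{1,\infty} + c_k^2 \Delta_{2,\infty}$ splits $r_2(k)$ into a dominant piece $(1 + 1/s(k))\frac{\alpha_k^2}{c_k^2}\Delta_{1,\infty}$ and a faster-decaying piece $(1 + 1/s(k))\alpha_k^2\Delta_{2,\infty}$. Using $1 + 1/s(k) \le 2/s(k)$ for $k \ge k_2$ together with the explicit forms $\alpha_k = \alpha_0/(k+1)$ and $c_k = c_0/(k+1)^{\delta}$, the dominant piece is $O((k+1)^{\tau + 2\delta - 2})$, while the second is smaller by a factor $(k+1)^{-2\delta}$, hence $O((k+1)^{\tau - 2})$.

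Then I would unroll the recursion as $v_{k+1} \le \big(\prod_{l=k_3}^{k} (1 - s(l))\big) v_{k_3} + \sum_{l=k_3}^{k} \big(\prod_{m=l+1}^{k} (1 - s(m))\big) r_2(l)$, or equivalently invoke a standard decay lemma for contraction--innovation recursions (along the lines of the techniques in \cite{kar2013distributed}). Since $\sum_l s(l)$ diverges for $\tau < 1$, the homogeneous term $\big(\prod_{l=k_3}^{k} (1 - s(l))\big) v_{k_3}$ decays faster than any polynomial and is absorbed into $Q_k$; likewise the convolution of the fast-decaying innovation piece contributes only an $O((k+1)^{-(2 - 2\tau)})$ term, again absorbed into $Q_k$. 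The convolution of the dominant innovation piece with the product kernel is concentrated near $l = k$ and evaluates asymptotically to $r_2(k)/s(k)$, producing the leading term of order $(k+1)^{-(2 - 2\tau - 2\delta)}$ with coefficient $\frac{4\Delta_{1,\infty}\alpha_0^2}{p_{\mathcal{L}}^2 \beta_0^2 c_0^2}$, where the numerical constant reflects the conservative bounds used on $1 + 1/s(k)$ and on the summation kernel.

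The main obstacle is the precise asymptotic evaluation of the convolution sum $\sum_{l=k_3}^{k} \big(\prod_{m=l+1}^{k} (1 - s(m))\big) r_2(l)$ so as to extract both the correct exponent $2 - 2\tau - 2\delta$ and the stated leading constant, while rigorously certifying that every remaining contribution genuinely decays faster than $(k+1)^{-(2 - 2\tau - 2\delta)}$ and may therefore be collected into $Q_k$. This requires care precisely because the contraction gain $s(k)$ is itself vanishing rather than bounded below by a positive constant, so the geometric-sum intuition must be made rigorous through the comparison estimates appropriate to such time-varying recursions.
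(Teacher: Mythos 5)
Your proposal is correct and follows essentially the same route as the paper: the same contraction estimate $(1+s(k))(1-s(k))^2 \le 1-s(k)$ valid once $s^2(k)<1$, the same split $\Delta_k = \Delta_{1,\infty} + c_k^2\Delta_{2,\infty}$ with the second piece absorbed into $Q_k$, and the same unrolling of the recursion with the leading term read off as (roughly) $r_2(k)/s(k)$. The one step you flag as the main obstacle is resolved in the paper by splitting the convolution sum at $\lfloor\frac{k-1}{2}\rfloor$: the early block is multiplied by $\exp\left(-\sum_{m=\lfloor\frac{k-1}{2}\rfloor}^{k}s(m)\right)$, which is superpolynomially small because that partial sum grows like $k^{1-\tau}$, while the late block is handled by pulling out the monotone coefficient $\alpha_l^2/(c_l^2 s^2(l))$ at the midpoint and telescoping $\sum_{l}\left(\prod_{m=l}^{k}(1-s(m))-\prod_{m=l+1}^{k}(1-s(m))\right)\le 1$, which produces the stated constant $4\Delta_{1,\infty}\alpha_0^2/(p_{\mathcal{L}}^2\beta_0^2 c_0^2)$ via $2^{2-2\tau-2\delta}\le 4$.
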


\begin{proof}
	From \eqref{eq:dis6.5}, we have for $k\geq k_3$,
	\begin{align}
	\label{eq:dis6.6}
	&\mathbb{E}\left[\left\|\widetilde{\mathbf{x}}(k+1)\right\|^{2}\right]\leq (1-s(k))\mathbb{E}\left[\left\|\widetilde{\mathbf{x}}(k)\right\|^{2}\right]\nonumber\\&+\left(1+\frac{1}{s(k)}\right)\frac{\alpha_k^2}{c_{k}^{2}}\Delta_{k}\nonumber\\
	&= (1-s(k))\mathbb{E}\left[\left\|\widetilde{\mathbf{x}}(k)\right\|^{2}\right]+\frac{\alpha_k^2}{c_{k}^{2}s(k)}\Delta_{k}+\frac{\alpha_k^2}{c_{k}^{2}}\Delta_{k}.
	\end{align}
	Choosing a $k$ such that $k\geq 2k_3+1$ for ease of analysis, from \eqref{eq:dis6.6}, we have,
	\begin{align}
	\label{eq:dis6.7}
	&\mathbb{E}\left[\left\|\widetilde{\mathbf{x}}(k+1)\right\|^{2}\right]\leq \prod_{l=k_3}^{k}(1-s(l))\mathbb{E}\left[\left\|\widetilde{\mathbf{x}}(k_3)\right\|^{2}\right]\nonumber\\
	&+\Delta_{k_{3}}\sum_{l=k_3}^{\lfloor\frac{k-1}{2}-1\rfloor}\prod_{m=l+1}^{k}(1-s(m))\left(\frac{\alpha_l^2}{c_{l}^{2}s(l)}+\frac{\alpha_l^2}{c_{l}^{2}}\right)\nonumber\\
	&+\Delta_{\lfloor\frac{k-1}{2}\rfloor}\sum_{l=\lfloor\frac{k-1}{2}\rfloor}^{k}\prod_{m=l+1}^{k}(1-s(m))\left(\frac{\alpha_l^2}{c_{l}^{2}s(l)}+\frac{\alpha_l^2}{c_{l}^{2}}\right)\nonumber\\
	&\le \exp\left(-\sum_{l=k_3}^{k}s(l)\right)\mathbb{E}\left[\left\|\widetilde{\mathbf{x}}(k_3)\right\|^{2}\right]\nonumber\\
	&+\Delta_{k_3}\prod_{m=\lfloor\frac{k-1}{2}\rfloor}^{k}(1-s(m))\sum_{l=k_3}^{\lfloor\frac{k-1}{2}\rfloor-1}\left(\frac{\alpha_l^2}{c_{l}^{2}s(l)}+\frac{\alpha_l^2}{c_{l}^{2}}\right)\nonumber\\
	&+\Delta_{\lfloor\frac{k-1}{2}\rfloor}\frac{\alpha_{\lfloor\frac{k-1}{2}\rfloor}^2}{c_{\lfloor\frac{k-1}{2}\rfloor}^{2}s^{2}(\lfloor\frac{k-1}{2}\rfloor)}\nonumber\\&\times\sum_{l=\lfloor\frac{k-1}{2}\rfloor}^{k}\left(\prod_{m=l}^{k}(1-s(m))-\prod_{m=l+1}^{k}(1-s(m))\right)\nonumber\\
	&+\Delta_{\lfloor\frac{k-1}{2}\rfloor}\frac{\alpha_{\lfloor\frac{k-1}{2}\rfloor}^2}{c_{\lfloor\frac{k-1}{2}\rfloor}^{2}s(\lfloor\frac{k-1}{2}\rfloor)}\nonumber\\&\times\sum_{l=\lfloor\frac{k-1}{2}\rfloor}^{k}\left(\prod_{m=l}^{k}(1-s(m))-\prod_{m=l+1}^{k}(1-s(m))\right)\nonumber\\
	&\le \underbrace{\exp\left(-\sum_{l=k_3}^{k}s(l)\right)\mathbb{E}\left[\left\|\widetilde{\mathbf{x}}(k_3)\right\|^{2}\right]}_{\text{$t_1$}}\nonumber\\
	&+\underbrace{\Delta_{k_3}\exp\left(-\sum_{m=\lfloor\frac{k-1}{2}\rfloor}^{k}s(m)\right)\sum_{l=k_3}^{\lfloor\frac{k-1}{2}\rfloor-1}\left(\frac{\alpha_l^2}{p_{\mathcal{L}}c_{l}^{2}\beta_{l}}+\frac{\alpha_l^2}{c_{l}^{2}}\right)}_{\text{$t_2$}}\nonumber\\
	&+\underbrace{\frac{4\Delta_{\lfloor\frac{k-1}{2}\rfloor}\alpha_{0}^{2}}{p_{\mathcal{L}}^{2}\beta_0^{2}c_0^2(k+1)^{2-2\tau-2\delta}}}_{\text{$t_3$}}+\underbrace{\frac{4\Delta_{\lfloor\frac{k-1}{2}\rfloor}\alpha_{0}^{2}}{p_{\mathcal{L}}\beta_0c_0^2(k+1)^{2-\tau-2\delta}}}_{\text{$t_4$}}.
	\end{align}
	In the above proof, the splitting in the interval $[k_3,k]$ was done at $\lfloor\frac{k-1}{2}\rfloor$ for ease of book keeping. The division can be done at an arbitrary point.
	It is to be noted that the sequence $\{s(k)\}$ is not summable and hence terms $t_1$ and $t_2$ decay faster than $(k+1)^{2-2\tau-2\delta}$. Also, note that term $t_4$ decays faster than $t_3$. Furthermore, $t_3$ can be written as
	{\small\begin{align*}
		&\frac{4\Delta_{\lfloor\frac{k-1}{2}\rfloor}\alpha_{0}^{2}}{p_{\mathcal{L}}^{2}\beta_0^{2}c_0^2(k+1)^{2-2\tau-2\delta}} = \underbrace{\frac{4\Delta_{1,\infty}\alpha_{0}^{2}}{p_{\mathcal{L}}^{2}\beta_0^{2}c_0^2(k+1)^{2-2\tau-2\delta}}}_{\text{$t_{31}$}}\nonumber\\&+\underbrace{\frac{c_{\lfloor\frac{k-1}{2}\rfloor}^{2}4\Delta_{2,\infty}\alpha_{0}^{2}}{p_{\mathcal{L}}^{2}\beta_0^{2}c_0^2(k+1)^{2-2\tau-2\delta}}}_{\text{$t_{32}$}},
		\end{align*}}
	from which we have that $t_{32}$ decays faster than $t_{31}$.
	For notational ease, henceforth we refer to $t_1+t_2+t_{32}+t_4=Q_{k}$, while keeping in mind that $Q_{k}$ decays faster than $(k+1)^{2-2\tau-2\delta}$.
\end{proof}
Hence, we have the disagreement given by,
\begin{align*}
\mathbb{E}\left[\left\|\widetilde{\mathbf{x}}(k+1)\right\|^{2}\right] \le =O\left(\frac{1}{k^{2-2\delta-2\tau}}\right).
\end{align*}
\end{proof}
\subsection{Proof of Theorem \ref{theorem-1}}
\label{sec:opt_gap}
\noindent In this subsection, we complete the proof of Theorem \ref{theorem-1} by characterizing the optimality gap of the network averaged optimizer estimate sequence and then combining it with the result obtained in Lemma \ref{lemma-disag-bound}.\\

\noindent Denote $\overline{\mathbf{x}}(k)=\frac{1}{N}\sum_{n=1}\mathbf{x}_{i}(k)$.
From \eqref{eq:dis1}, we have,\\
{\small\begin{align}
\label{eq:opt1}
&\overline{\mathbf{x}}(k+1) = \overline{\mathbf{x}}(k)\nonumber\\&-\frac{\alpha_{k}}{c_k}\left[\frac{c_k}{N}\sum_{i=1}^{N}\nabla f_{i}\left(\mathbf{x}_{i}(k)\right)+\underbrace{\frac{c_k^{2}}{N}\sum_{i=1}^{N}\mathbf{P}_{i}\left(\mathbf{x}_{i}(k)\right)}_{\text{$\overline{\mathbf{P}}\left(\mathbf{x}(k)\right)$}}+\underbrace{\frac{1}{N}\sum_{i=1}^{N}\mathbf{v}_{i}(k)}_{\text{$\overline{\mathbf{v}}(k)$}}\right]\nonumber\\
&\Rightarrow \overline{\mathbf{x}}(k+1) = \overline{\mathbf{x}}(k)\nonumber\\&-\frac{\alpha_k}{Nc_{k}}\left[\sum_{i=1}^{N}\nabla f_{i}\left(\mathbf{x}_{i}(k)\right)-\nabla f_{i}\left(\overline{\mathbf{x}}(k)\right)+\nabla f_{i}\left(\overline{\mathbf{x}}(k)\right)\right]\nonumber\\&-\frac{\alpha_{k}}{c_{k}}\left(\overline{\mathbf{v}}(k)+\overline{\mathbf{P}}\left(\mathbf{x}(k)\right)\right).
\end{align}}
Recall that $f(\cdot)=\sum_{i=1}^{N}f_{i}(\cdot)$.
Then, we have,
\begin{align}
\label{eq:opt2}
&\overline{\mathbf{x}}(k+1) = \overline{\mathbf{x}}(k)-\frac{\alpha_{k}}{N}\nabla f\left(\overline{\mathbf{x}}(k)\right)\nonumber\\&-\frac{\alpha_k}{N}\left[\sum_{i=1}^{N}\nabla f_{i}\left(\mathbf{x}_{i}(k)\right)-\nabla f_{i}\left(\overline{\mathbf{x}}(k)\right)\right]\nonumber\\&-\frac{\alpha_{k}}{c_{k}}\left(\overline{\mathbf{v}}(k)+\overline{\mathbf{P}}\left(\mathbf{x}(k)\right)\right)\nonumber\\
&\Rightarrow \overline{\mathbf{x}}(k+1) = \overline{\mathbf{x}}(k)-\frac{\alpha_k}{Nc_{k}}\left[c_{k}\nabla f\left(\overline{\mathbf{x}}(k)\right)+\mathbf{e}(k)\right],
\end{align}
where
\begin{align}
\label{eq:opt3}
&\mathbf{e}(k) = N\overline{\mathbf{v}}(k)\nonumber\\&+\underbrace{N\overline{P}\left(\mathbf{x}(k)\right)+c_{k}\sum_{i=1}^{N}\left(\nabla f_{i}\left(\mathbf{x}_{i}(k)\right)-\nabla f_{i}\left(\overline{\mathbf{x}}(k)\right)\right)}_{\text{$\boldsymbol{\epsilon}(k)$}}.
\end{align}
Note that, $c_{k}\left\|\nabla f_{i}\left(\mathbf{x}_{i}(k)\right)-\nabla f_{i}\left(\overline{\mathbf{x}}(k)\right)\right\| \leq c_{k}L\left\|\mathbf{x}_{i}(k)-\overline{\mathbf{x}}(k)\right\| = c_{k}L\left\|\widetilde{\mathbf{x}}_{i}(k)\right\|$. We also have that, $\left\|\overline{\mathbf{P}}\left(\mathbf{x}(k)\right)\right\| \le (L-\mu)\sqrt{d}c_{k}^{2}$. Thus, we can conclude that, $\forall k\geq k_3$
\begin{align}
\label{eq:opt4}
&\boldsymbol{\epsilon}(k) = c_{k}\sum_{i=1}^{N}\left(\nabla f_{i}\left(\mathbf{x}_{i}(k)\right)-\nabla f_{i}\left(\overline{\mathbf{x}}(k)\right)\right)+N\overline{P}\left(\mathbf{x}(k)\right)\nonumber\\
&\Rightarrow\left\|\boldsymbol{\epsilon}(k)\right\|^{2} \leq 2NL^{2}c_{k}^{2}\left\|\widetilde{\mathbf{x}}(k)\right\|^{2}+2(L-\mu)^{2}N^{2}dc_{k}^{4}\nonumber\\
&\Rightarrow\mathbb{E}\left[\left\|\boldsymbol{\epsilon}(k)\right\|^{2}\right] \leq \frac{8NL^{2}\Delta_{1,\infty}\alpha_{0}^{2}}{p_{\mathcal{L}}^{2}\beta_0^{2}(k+1)^{2-2\tau}}+\frac{2(L-\mu)^{2}N^{2}dc_{0}^{4}}{(k+1)^{4\delta}}\nonumber\\&+\frac{2NL^{2}Q_{k}c_{0}^{2}}{(k+1)^{2\delta}}.
\end{align}
With the above development in place, we rewrite \eqref{eq:opt2} as follows:
\begin{align}
\label{eq:opt5}
&\overline{\mathbf{x}}(k+1) = \overline{\mathbf{x}}(k)-\frac{\alpha_{k}}{N}\nabla f\left(\overline{\mathbf{x}}(k)\right)-\frac{\alpha_k}{Nc_{k}}\boldsymbol{\epsilon}(k)-\frac{\alpha_{k}}{c_{k}}\overline{\mathbf{v}}(k)\nonumber\\
&\Rightarrow \overline{\mathbf{x}}(k+1)-\mathbf{x}^{\ast} = \overline{\mathbf{x}}(k)-\mathbf{x}^{\ast}-\frac{\alpha_{k}}{N}\left[\nabla f\left(\overline{\mathbf{x}}(k)\right)-\underbrace{\nabla f\left(\mathbf{x}^{\ast}\right)}_{\text{$=0$}}\right]\nonumber\\&-\frac{\alpha_k}{Nc_{k}}\boldsymbol{\epsilon}(k)-\frac{\alpha_{k}}{c_{k}}\overline{\mathbf{v}}(k).
\end{align}
By Leibnitz rule, we have,
\begin{align}
\label{eq:opt6}
&\nabla f\left(\overline{\mathbf{x}}(k)\right)-\nabla f\left(\mathbf{x}^{\ast}\right) \nonumber\\&= \underbrace{\left[\int_{s=0}^{1}\nabla^{2}f\left(\mathbf{x}^{\ast}+s\left(\overline{\mathbf{x}}(k)-\mathbf{x}^{\ast}\right)\right)ds\right]}_{\text{$\overline{\mathbf{H}}_{k}$}}\left(\overline{\mathbf{x}}(k)-\mathbf{x}^{\ast}\right),
\end{align}
where it is to be noted that $NL\succcurlyeq\overline{\mathbf{H}}_{k}\succcurlyeq N\mu$.
Using \eqref{eq:opt6} in  \eqref{eq:opt5}, we have,
\begin{align}
\label{eq:opt7}
&\left(\overline{\mathbf{x}}(k+1)-\mathbf{x}^{\ast}\right) = \left[\mathbf{I}-\frac{\alpha_k}{N}\overline{\mathbf{H}}_{k}\right]\left(\overline{\mathbf{x}}(k)-\mathbf{x}^{\ast}\right)\nonumber\\&-\frac{\alpha_k}{Nc_{k}}\boldsymbol{\epsilon}(k)-\frac{\alpha_{k}}{c_{k}}\overline{\mathbf{v}}(k).
\end{align}
Denote by $\mathbf{m}(k)=\left[\mathbf{I}-\frac{\alpha_k}{N}\overline{\mathbf{H}}_{k}\right]\left(\overline{\mathbf{x}}(k)-\mathbf{x}^{\ast}\right)-\frac{\alpha_k}{Nc_{k}}\boldsymbol{\epsilon}(k)$ and note that $\mathbf{m}(k)$ is conditionally independent from $\overline{\mathbf{v}}(k)$ given the history $\mathcal{F}_{k}$. Then \eqref{eq:opt7} can be rewritten as:
\begin{align}
\label{eq:opt8}
&\left(\overline{\mathbf{x}}(k+1)-\mathbf{x}^{\ast}\right)=\mathbf{m}(k)-\frac{\alpha_{k}}{c_{k}}\overline{\mathbf{v}}(k)\nonumber\\
&\Rightarrow \left\|\overline{\mathbf{x}}(k+1)-\mathbf{x}^{\ast}\right\|^{2} \le \left\|\mathbf{m}(k)\right\|^{2}\nonumber\\&-2\frac{\alpha_{k}}{c_{k}}\mathbf{m}(k)^{\top}\overline{\mathbf{v}}(k)+\frac{\alpha_{k}^{2}}{c_{k}^{2}}\left\|\overline{\mathbf{v}}(k)\right\|^{2}.
\end{align}
Using the properties of conditional expectation and noting that $\mathbb{E}\left[\mathbf{v}(k)|\mathcal{F}_{k}\right]=\mathbf{0}$, we have,
{\small\begin{align}
\label{eq:opt9}
&\mathbb{E}\left[\left\|\overline{\mathbf{x}}(k+1)-\mathbf{x}^{\ast}\right\|^{2}|\mathcal{F}_{k}\right] \le \left\|\mathbf{m}(k)\right\|^{2}+\frac{\alpha_{k}^{2}}{c_k^2}\mathbb{E}\left[\left\|v(k)\right\|^{2}|\mathcal{F}_{k}\right]\nonumber\\
&\Rightarrow \mathbb{E}\left[\left\|\overline{\mathbf{x}}(k+1)-\mathbf{x}^{\ast}\right\|^{2}\right] \le \mathbb{E}\left[\left\|\mathbf{m}(k)\right\|^{2}\right]\nonumber\\&+
\frac{2\alpha_{k}^{2}\left(c_{f}q_{\infty}(N,d,\alpha_0,c_0)+N\sigma_1^{2}\right)}{c_k^2}.
\end{align}}
Using \eqref{eq:cool_ineq_1}, we have for $\mathbf{m}(k)$,
\begin{align}
\label{eq:opt10}
&\left\|\mathbf{m}(k)\right\|^{2} \le \left(1+\theta_{k}\right)\left\|\mathbf{I}-\frac{\alpha_k}{N}\overline{\mathbf{H}}_{k}\right\|^{2}\left\|\overline{\mathbf{x}}(k)-\mathbf{x}^{\ast}\right\|^{2}\nonumber\\&+\left(1+\frac{1}{\theta_k}\right)\frac{\alpha_k^2}{N^2c_{k}^{2}}\left\|\boldsymbol{\epsilon}(k)\right\|^{2}\nonumber\\
&\le \left(1+\theta_{k}\right)\left(1-\frac{\mu\alpha_{0}}{N(k+1)}\right)^{2}\left\|\overline{\mathbf{x}}(k)-\mathbf{x}^{\ast}\right\|^{2}\nonumber\\&+\left(1+\frac{1}{\theta_k}\right)\frac{\alpha_k^2}{N^2c_{k}^{2}}\left\|\boldsymbol{\epsilon}(k)\right\|^{2}.
\end{align}
On choosing $\theta_{k}=\frac{\mu\alpha_0}{N(k+1)}$, we have for all $k\geq k_3$, 
{\small\begin{align}
\label{eq:opt11}
&\mathbb{E}\left[\left\|\mathbf{m}(k)\right\|^{2}\right] \leq \left(1-\frac{\mu\alpha_0}{N(k+1)}\right)\mathbb{E}\left[\left\|\overline{\mathbf{x}}(k)-\mathbf{x}^{\ast}\right\|^{2}\right]\nonumber\\&+
\frac{16\Delta_{1,\infty}\alpha_{0}^{3}}{c_0^2p_{\mathcal{L}}^{2}\beta_0^{2}(k+1)^{3-2\tau-2\delta}}+\frac{4(L-\mu)^{2}Nd\alpha_{0}c_{0}^{2}}{(k+1)^{1+2\delta}}\nonumber\\&+\frac{4Q_{k}c_{0}^{2}}{(k+1)^{2\delta}}\nonumber\\
&\Rightarrow \mathbb{E}\left[\left\|\overline{\mathbf{x}}(k+1)-\mathbf{x}^{\ast}\right\|^{2}\right] \leq \left(1-\frac{\mu\alpha_0}{N(k+1)}\right)\nonumber\\&\times\mathbb{E}\left[\left\|\overline{\mathbf{x}}(k)-\mathbf{x}^{\ast}\right\|^{2}\right]\nonumber\\&+\frac{16\Delta_{1,\infty}\alpha_{0}^{3}}{c_0^2p_{\mathcal{L}}^{2}\beta_0^{2}(k+1)^{3-2\tau-2\delta}}+\frac{4(L-\mu)^{2}Nd\alpha_{0}c_{0}^{2}}{(k+1)^{1+2\delta}}\nonumber\\&+\frac{4Q_{k}c_{0}^{2}}{(k+1)^{2\delta}}+\frac{2\alpha_{0}^{2}\left(c_{f}q_{\infty}(N,d,\alpha_0,c_0)+N\sigma_1^{2}\right)}{c_0^2(k+1)^{2-2\delta}}.
\end{align}}
Proceeding as in \eqref{eq:dis6.7}, we have $\forall k \geq k_3$
{\small\begin{align}
\label{eq:opt11.5}
&\mathbb{E}\left[\left\|\overline{\mathbf{x}}(k+1)-\mathbf{x}^{\ast}\right\|^{2}\right]\nonumber\\
& \leq \underbrace{\exp\left(-\frac{\mu}{N}\sum_{l=k_5}^{k}\alpha_{l}\right)\mathbb{E}\left[\left\|\overline{\mathbf{x}}(k)-\mathbf{x}^{\ast}\right\|^{2}\right]}_{\text{$t_6$}}\nonumber\\
&+\underbrace{\exp\left(-\frac{\mu}{N}\sum_{m=\lfloor\frac{k-1}{2}\rfloor}^{k}\alpha_{m}\right)\sum_{l=k_5}^{\lfloor\frac{k-1}{2}\rfloor-1}\frac{16\Delta_{1,\infty}\alpha_{0}^{3}}{c_0^2p_{\mathcal{L}}^{2}\beta_0^{2}(l+1)^{3-2\tau-2\delta}}}_{\text{$t_7$}}\nonumber\\&+\underbrace{\exp\left(-\frac{\mu}{N}\sum_{m=\lfloor\frac{k-1}{2}\rfloor}^{k}\alpha_{m}\right)\sum_{l=k_5}^{\lfloor\frac{k-1}{2}\rfloor-1}\frac{4(L-\mu)^{2}Nd\alpha_{0}c_{0}^{2}}{(k+1)^{1+2\delta}}}_{\text{$t_8$}}\nonumber\\
&+\underbrace{\exp\left(-\frac{\mu}{N}\sum_{m=\lfloor\frac{k-1}{2}\rfloor}^{k}\alpha_{m}\right)\sum_{l=k_5}^{\lfloor\frac{k-1}{2}\rfloor-1}\frac{4Q_{k}c_{0}^{2}}{(k+1)^{2\delta}}}_{\text{$t_{9}$}}\nonumber\\
&+\underbrace{\exp\left(-\frac{\mu}{N}\sum_{m=\lfloor\frac{k-1}{2}\rfloor}^{k}\alpha_{m}\right)\sum_{l=k_5}^{\lfloor\frac{k-1}{2}-1}\frac{2\alpha_{0}^{2}c_{f}q_{\infty}(N,d,\alpha_0,c_0)}{c_0^2(k+1)^{2-2\delta}}}_{\text{$t_{10}$}}\nonumber\\
&+\underbrace{\exp\left(-\frac{\mu}{N}\sum_{m=\lfloor\frac{k-1}{2}\rfloor}^{k}\alpha_{m}\right)\sum_{l=k_5}^{\lfloor\frac{k-1}{2}-1}\frac{2\alpha_0^2N\sigma_1^{2}}{c_0^2(k+1)^{2-2\delta}}}_{\text{$t_{11}$}}\nonumber\\
&+\underbrace{\frac{64N\Delta_{1,\infty}\alpha_{0}^{2}}{\mu c_0^2p_{\mathcal{L}}^{2}\beta_0^{2}(k+1)^{2-2\tau-2\delta}}}_{\text{$t_{12}$}}\nonumber\\
&+\underbrace{\frac{4(L-\mu)^{2}N^{2}dc_{0}^{2}}{\mu(k+1)^{2\delta}}}_{\text{$t_{13}$}}+\underbrace{\frac{2Nc_{0}^{2}Q_{k}}{\mu\alpha_{0}(k+1)^{2\delta-1}}}_{\text{$t_{14}$}}\nonumber\\
&+\underbrace{\frac{4N\alpha_{0}\left(c_{f}q_{\infty}(N,d,\alpha_0,c_0)+N\sigma_1^{2}\right)}{c_0^2\mu(k+1)^{1-2\delta}}}_{\text{$t_{15}$}}.
\end{align}}
It is to be noted that the term $t_6$ decays exponentially. The terms $t_7$, $t_8$, $t_9$, $t_{10}$ and $t_{11}$ decay faster than its counterparts in the terms $t_{12}$, $t_{13}$, $t_{14}$ and $t_{15}$ respectively. We note that $Q_{l}$ also decays faster.
Hence, the rate of decay of $\mathbb{E}\left[\left\|\overline{\mathbf{x}}(k+1)-\mathbf{x}^{\ast}\right\|^{2}\right]$ is determined by the terms $t_{12}$, $t_{13}$ and $t_{15}$. Thus, we have that, $\mathbb{E}\left[\left\|\overline{\mathbf{x}}(k+1)-\mathbf{x}^{\ast}\right\|^{2}\right]=O\left(k^{-\delta_{1}}\right)$,
where $\delta_{1}=\min\left\{1-2\delta,2-2\tau-2\delta,2\delta\right\}$. For notational ease, we refer to $t_6+t_7+t_8+t_9+t_{10}+t_{11}+t_{14}= M_{k}$ from now on.
Finally, we note that,
\begin{align}
\label{eq:opt12}
&\left\|\mathbf{x}_{i}(k)-\mathbf{x}^{\ast}\right\| \le \left\|\overline{\mathbf{x}}(k)-\mathbf{x}^{\ast}\right\|+\left\|\underbrace{\mathbf{x}_{i}(k)-\overline{\mathbf{x}}(k)}_{\text{$\widetilde{\mathbf{x}}_{i}(k)$}}\right\|\nonumber\\
&\Rightarrow\left\|\mathbf{x}_{i}(k)-\mathbf{x}^{\ast}\right\|^{2} \leq 2\left\|\widetilde{\mathbf{x}}_{i}(k)\right\|^{2}+2\left\|\overline{\mathbf{x}}(k)-\mathbf{x}^{\ast}\right\|^{2}\nonumber\\
&\Rightarrow\mathbb{E}\left[\left\|\mathbf{x}_{i}(k)-\mathbf{x}^{\ast}\right\|^{2}\right] \le 2M_{k}+\frac{64N\Delta_{1,\infty}\alpha_{0}^{2}}{\mu c_0^2p_{\mathcal{L}}^{2}\beta_0^{2}(k+1)^{2-2\tau-2\delta}}\nonumber\\&\frac{4(L-\mu)^{2}N^{2}dc_{0}^{2}}{\mu(k+1)^{2\delta}}+2Q_{k}+
\frac{8\Delta_{1,\infty}\alpha_{0}^{2}}{p_{\mathcal{L}}^{2}\beta_0^{2}c_0^2(k+1)^{2-2\tau-2\delta}}\nonumber\\&+
\frac{4N\alpha_{0}\left(c_{f}q_{\infty}(N,d,\alpha_0,c_0)+N\sigma_1^{2}\right)}{c_0^2\mu(k+1)^{1-2\delta}}\nonumber\\
&\Rightarrow\mathbb{E}\left[\left\|\mathbf{x}_{i}(k)-\mathbf{x}^{\ast}\right\|^{2}\right] = O\left(\frac{1}{k^{\delta_{1}}}\right),~~\forall i,
\end{align}
where $\delta_{1}=\min\left\{1-2\delta,2-2\tau-2\delta,2\delta\right\}$. By, optimizing over $\tau$ and $\delta$, we obtain that for $\tau=1/2$ and $\delta=1/4$,
\begin{align*}
\mathbb{E}\left[\left\|\mathbf{x}_{i}(k)-\mathbf{x}^{\ast}\right\|^{2}\right] = O\left(\frac{1}{k^{\frac{1}{2}}}\right),~~\forall i.
\end{align*}

\section{Simulation Example}
\label{sec:sim}
\noindent We provide a simulation example pertaining to $\ell_2$-regularized logistic losses in random network characterized by link failures independent across iteration and links with probability $p_{\mathrm{fail}}$. To be specific, we consider $\ell_2$-regularized empirical risk minimization with logistic loss, where the regularization function is given by $\Psi_i(\mathbf{x})=
\frac{\kappa}{2}\|\mathbf{x}\|^2$, $i=1,...,N$, with $\kappa=0.3$. In our simulation setup, each node has access to $n_i = 10$ data points. The class labels and the classification vector given by 
$b_{ij}=\mathrm{sign} \left( (\mathbf{x}^\prime_1)^\top \mathbf{a}_{i,j}+x^\prime_0+\epsilon_{ij}\right)$ and $x^\prime=((\mathbf{x}_1^\prime)^\top, x_0^\prime)^\top$ respectively have $\epsilon_{ij}$s and the entries of $x^\prime$ drawn independently from standard normal distribution. The feature vectors $\mathbf{a}_{i,j}$, $j=1,...,n_i$, across different nodes $i=1,\cdots,N$ and across different entries are drawn independently from different distributions. To be specific, at node $i$, $\mathbf{a}_{i,j}$, $j=1,...,n_i$ is generated by adding a standard normal random variable and an uniform random variable with support $[0,\,5\,i]$.

\noindent We set $\beta_k=\frac{1}{\theta\,(k+1)^{1/2}}$, $\alpha_k=\frac{1}{k+1}$, $c_{k}= \frac{1}{(k+1)^{1/4}}$, $k=0,1,...$, where $\theta=7$ is the maximum degree across nodes. The optimizer estimate at each node is initialized as $\mathbf{x}_i(0)=0$, $\forall~i=1,...,N$.

\noindent We consider a connected network $\mathcal{G}$ with $N=10$ nodes and $23$ links, generated as an instance of a random geometric graph. The random network model assumes link failures independent across iterations and links with probability~$p_{\mathrm{fail}}$, where $p_{\mathrm{fail}} \in \{0;\,0.5;\,0.7\}$. The case $p_{\mathrm{fail}}=0$ corresponds to the case where none of the links fail. We also include a comparison with the centralized zeroth order KWSA based optimization method:
{\small\begin{equation}
\label{eqn-centralized-SGD}
\mathbf{y}(k+1) = \mathbf{y}(k) -
\frac{1}{N(k+1)}
\sum_{i=1}^N \nabla g_{i}\left(\,\mathbf{y}(k);\,\mathbf{a}_i(k),b_i(k)\,\right),
\end{equation}}
\noindent where $(\mathbf{a}_i(k),b_i(k))$ is drawn uniformly from the set $(\mathbf{a}_{i,j},b_{i,j})$, $j=1,...,n_i$. Algorithm~\eqref{eqn-centralized-SGD} shows how~\eqref{eq:update_rule_node} would be implemented if there existed a fusion node with access to all nodes' data.
Hence, the comparison with~\eqref{eqn-centralized-SGD} allows us to study the degradation of \eqref{eq:update_rule_node} due to lack of global model information. The step size for \eqref{eqn-centralized-SGD} is set to $1/N(k+1)$. As an error metric, we use the mean square error (MSE) estimate averaged across nodes: $\frac{1}{N} \sum_{i=1}^N {\|\mathbf{x}_i(k)-{\mathbf{x}}^\star\|^2}. $

\noindent Figure~\ref{Figure_1} plots the estimated MSE, averaged across 100 algorithm runs, versus iteration number~$k$ for $p_{\mathrm{fail}} \in \{0;\,0.5;\,0.7\}$ in $\mathrm{log}_{10}$-$\mathrm{log}_{10}$ scale. The slope of the plot curve corresponds to the sublinear rate of the method; e.g., the $-1/2$ slope corresponds to a $1/k^{0.5}$ rate.
It is to be noted that for all values of~$p_{\mathrm{fail}}$, the  algorithm \eqref{eq:update_rule_node} achieves on this example (at least) the $1/k^{0.5}$ rate, thus corroborating our theory.
The increase of the link failure probability only increases the constant in the MSE but does not affect the rate but the curves are only vertically shifted. Interestingly, the loss due to
the increase of $p_{\mathrm{fail}}$ is small; e.g., the curves that correspond to $p_{\mathrm{fail}}=0.5$ and $p_{\mathrm{fail}}=0$ (no link failures) practically match.
Figure~\ref{Figure_1} also shows the performance of the centralized method~\eqref{eqn-centralized-SGD}.
We can see that, the distributed method \eqref{eq:update_rule_node} is very close in performance to the centralized method.
\begin{figure}[thpb]
	\centering
	\includegraphics[height=2.8 in,width=3.4 in]{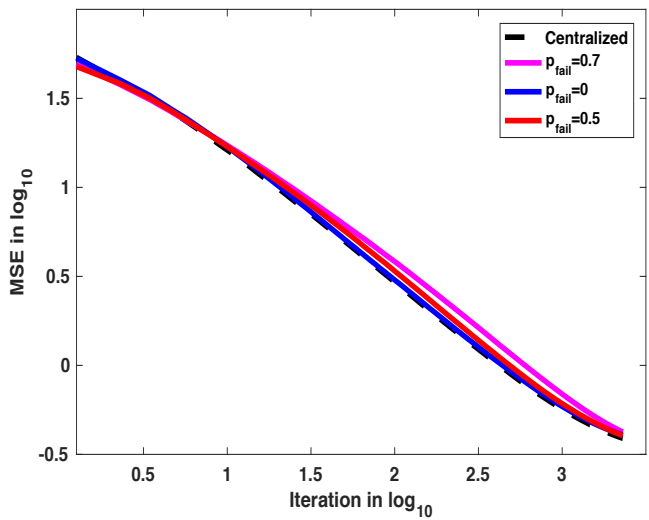}
	\caption{Estimated MSE versus iteration number~$k$ for algorithm \eqref{eq:update_rule_node}
		with link failure probability $p_{\mathrm{fail}}=0$ (blue, solid line);
		$0.5$ (red, solid line); and $0.7$ (pink, solid line).
		The Figure also shows the performance of
		the centralized stochastic gradient method in~\eqref{eqn-centralized-SGD}
		(black, dashed line).}
	\label{Figure_1}
	\vspace{-5mm}
\end{figure}
\section{Conclusion}
\label{sec:conc}
\noindent We have considered a distributed stochastic zeroth order optimization method for smooth strongly convex optimization, where we have employed a Kiefer Wolfowitz stochastic approximation type algorithm. Through the analysis of the considered method, we have established the $O(1/k^{1/2})$ MSE convergence rate for the assumed optimization setting when the underlying network is
randomly varying. In particular, we have also quantified the mean square error of the generated optimizer estimate sequence in terms of the algorithm parameters. Future work includes extending the current approach to general class of convex and non-convex functions.

\bibliographystyle{IEEEtran}
\bibliography{IEEEabrv,CentralBib,dsprt,glrt}

\end{document}